\documentclass[12pt]{article}

\usepackage[centertags,sumlimits]{amsmath}
\usepackage{amssymb}
\usepackage{amsthm}

\newtheorem{thm}{Theorem}[section]
\newtheorem{lemma}[thm]{Lemma}
\newtheorem{prop}[thm]{Proposition}
\newtheorem{cor}[thm]{Corollary}
\newtheorem{rem}[thm]{Remark}

\newtheorem*{defn}{Definition}
\newtheorem*{remark}{Remark}

\numberwithin{equation}{section}

\topmargin -20mm
\oddsidemargin -1mm
\textwidth 160mm

\textheight 237mm
\columnwidth\textwidth

\newcommand{\ca}{{\cal A}}    \newcommand{\cb}{{\cal B}}   
\newcommand{\ce}{{\cal E}}    \newcommand{\cf}{{\cal F}}   
       
    \newcommand{\cn}{{\cal N}}   \newcommand{\cp}{{\cal P}}
\newcommand{\calr}{{\cal R}}     \newcommand{\ct}{{\cal T}}
\newcommand{\cu}{{\cal U}}       \newcommand{\cv}{{\cal V}}
       \newcommand{\cq}{{\cal Q}}

   \newcommand{\N}{\mathbb{N}}  
\newcommand{\al}{{\alpha}}


\begin{document}

\noindent
{\Large\bf On the quasi-regularity of non-sectorial Dirichlet forms by processes having
the same polar sets}\\[2mm]

\noindent {\bf Lucian Beznea}\footnote{"Simion Stoilow" Institute
of Mathematics  of the Romanian Academy, P.O. Box \mbox{1-764,}
RO-014700 Bucharest, Romania. E-mail: lucian.beznea@imar.ro} {\bf
and Gerald Trutnau}\footnote{Seoul National University, Department
of Mathematical Sciences and Research Institute of Mathematics, San56-1 Shinrim-dong Kwanak-gu, Seoul
151-747, South Korea. E-mail: trutnau@snu.ac.kr}

\vspace{7mm}

\noindent {\small{\bf Abstract.} 
We obtain a criterion for the quasi-regularity of generalized (non-sectorial) Dirichlet forms,
which extends the result of P.J. Fitzsimmons on the
quasi-regularity of (sectorial) semi-Dirichlet forms.
Given the right (Markov) process
associated to a semi-Dirichlet form, we present sufficient
conditions for a second right process to be a standard one, having
the same state space.  The above mentioned quasi-regularity criterion is then an application.  
The conditions are expressed in terms of the associated capacities, nests of compacts, polar sets, and quasi-continuity.
A second  application is   on  the quasi-regularity of the generalized Dirichlet forms obtained by
perturbing  a semi-Dirichlet form with kernels.\\

\noindent
{\bf Mathematics Subject Classification (2000):}
31C25,    
31C15,    
60J45,   
47A55,     
47D07,     
60J35,   
60J40.   
}

\noindent {\bf Key words:} Dirichlet form, generalized Dirichlet form,  quasi-regularity, standard process,
capacity, quasi-continuity,
polar set, right process, weak duality.

\section{Introduction}\label{1}
The theory of Dirichlet forms is a powerful tool in the study of Markov processes, since it
combines different areas of mathematics such as probability, potential, and semigroup theory, as well as 
the theory of partial differential equations (see monographs \cite{fot}, \cite{mr} and references therein). 
For instance, the classical energy calculus in combination with the potential theory of additive functionals 
allows to 
obtain an extension of It\^o's formula for only
weakly differentiable functions, i.e. functions in the domain of the form. 
This celebrated extension of It\^o's formula where the martingale and the possibly unbounded variation drift part 
are controlled through the energy is 
well-known as {\it Fukushima's decomposition} of additive functionals (see e.g. \cite[Theorem 5.2.2]{fot}). \\
Until recent years the applicability of Dirichlet form theory was limited
to symmetric Markov processes (see \cite{fot}) or, more generally, to Markov processes satisfying a 
sector condition (cf. \cite{mr}; see also sections 7.5  and  7.7  from  \cite{BeBo 04}  
for the connections with the right processes). 
Within the theory of {\it generalized} (non-sectorial) {\it Dirichlet forms} (see \cite{St1}, 
and \cite{Tr1} for the associated stochastic calculus)
this limitation has been overcome since in this generalized framework only the existence 
of a positive measure $\mu$ is required for which the transition semigroup of the Markov process 
operates as a $C_0$-semigroup of contractions on $L^2(\mu)$. 
In particular, as no sector condition has to be verified, the 
theory of generalized Dirichlet forms is robust and well-suited for far-reaching perturbation methods.\\
A central analytic property in the theory of symmetric, sectorial, and non-sectorial Dirichlet forms 
is the quasi-regularity of the forms, because only such forms can be associated with a nice Markov process, 
i.e. a {\it standard process} (see \cite[IV. Theorem 3.1]{St1}). 
Moreover, after review of \cite{Tr1} and the present paper one can recognize that a 
stochastic calculus for generalized Dirichlet forms can be developed by only 
assuming the standardness of an associated process and the existence of an excessive measure for the underlying $L^2$-space 
of the form (such a measure is called an {\it excessive reference measure}). 
On the other hand it was shown in \cite{RoTr} that any right continuous Markov process 
can be related to a generalized Dirichlet form with excessive reference measure. 
We were hence led to the following question:
Given a right Markov process $X$ on a fairly general state space. Under which 
additional analytic condition is $X$ a standard process ? 
This question has first  been investigated in \cite{Fi 01} under the additional condition that 
$X$ is associated with a semi-Dirichlet from (see \cite{MaOvRo}). 
A different approach and several extensions have been developed  in sections 3.7 and 7.7 from \cite{BeBo 04}.
Moreover as an application in \cite{Fi 01} 
the theory of Revuz measures in the semi-Dirichlet 
context is developed and it is shown that quasi-regularity is invariant under time change. 
In particular, if there is an {\it excessive} reference measure 
then the developed theory of Revuz measures can be related to the context 
of classical energy (see \cite[(4.19) Remarks]{Fi 01}).  Hence from the 
point of view of applications it is interesting to investigate the question which additional analytic condition 
leads to the standardness in the case of generalized Dirichlet forms or likewise in the case of any right Markov process. 
This is what we do in subsections \ref{3.1} and \ref{2} respectively.\\
The definition of standard process is quite abstract and technical but one can say in general that the standardness property 
and there mainly the quasi-left continuity connects the analytic   
with the probabilistic potential  theory. For a right Markov process that is not quasi-left continuous 
only capacity zero sets are polar and not vice versa (see Remark \ref{quasileftandcap}(a) for more on this). \\ 
Our main application is an extension of a result in \cite{Fi 01} to the case of non-sectorial generalized Dirichlet 
forms and can roughly and abridged be stated as follows. 
Given two right Markov processes on the same state space  
from which one is associated to a semi-Dirichlet form $\ca$ and from which the other one is associated to a 
generalized Dirichlet form whose sectorial part is given by $\ca$. 
Then the second process is standard if the 
capacities of both forms are equivalent (cf. Remark \ref{exgendf}$(b)$, Theorem \ref{gdfstandard}$(i)$). 
Of course, if the two  bilinear forms coincide we may assume that both right Markov processes are the same. 
Then the condition on the capacity is trivially satisfied and 
we obtain the original result from \cite{Fi 01}, i.e. that a right Markov process that is 
associated with a semi-Dirichlet form is automatically a standard (Markov) process. 
For the mathematically precise formulation of our main result and why we recover the case of semi-Dirichlet 
forms see the paragraph right in front of Theorem \ref{gdfstandard} and the theorem itself.
It is important to mention 
that since we compare the second process with an \lq\lq elliptic\rq\rq\ sectorial process associated with a semi-Dirichlet form, 
we are not able to handle the time-dependent case. We guess that the time-dependent case can be handled by comparing the 
second process to a standard process associated 
with a time-dependent Dirichlet form (see \cite{o}, \cite{RuTr}). This \lq\lq guess\rq\rq\ might be subject of future investigations.\\
Once the standardness is shown,  as in the original paper \cite{Fi 01}, 
we can derive the quasi-regularity 
of the associated Dirichlet form (see Theorem \ref{gdfstandard}(ii)). 
The tightness, i.e. the existence of a nest of compacts, is automatically satisfied 
if we only slightly concretize the state space (see Remark \ref{quasileftandcap} $(b)$ and $(c)$), 
and the special property is in contrast to \cite{mr}, and \cite{St1}, not used in order to show 
that the process resolvent applied to bounded $L^2$-functions is quasi-continuous, i.e. that the process is
properly associated in the resolvent sense with the form  (see Remark \ref{gdfqr}). 
For the quasi-regular generalized Dirichlet form and the associated standard process all results from \cite{St1}, 
and \cite{Tr1}  will be available (cf. first paragraph of section \ref{3}).\\
In subsection \ref{3.2} we present an application where the resolvent of a right 
Markov process is explicitly given as the perturbation of the resolvent of a quasi-regular semi-Dirichlet form. 
Typical examples are given through perturbations with the $\beta$-potential kernel of a continuous additive functional (see \cite{BeBo 09}) and 
by a potential theoretical approach to measure-valued
discrete branching (Markov) processes (see \cite{BeOp11}).
First, in subsection \ref{3.2} we develop general conditions for the perturbed resolvent to be associated with a standard process $X$
(see Proposition \ref{capacitiesperturbation}(ii), and (iii)).  
Then, under the {\it absolute continuity condition},  in subsection \ref{3.2.1} 
we construct explicitly a quasi-regular generalized Dirichlet form that is 
properly associated in the resolvent sense with $X$ (see Corollary \ref{GDF2}). 
As in \cite{RoTr} the generalized Dirichlet form is constructed directly from its generator 
which is also the generator of the underlying Markov process, 
i.e. the $L^2$-infinitesimal generator of the transition 
semigroup $(p_t^{\beta})_{t\ge 0}$ of the Markov process (cf. (\ref{semigroup}) and paragraph below). 
In this application we would like to emphasize two points. First, we do not obtain as usual the standard process 
by showing regularity properties of the form. We rather show that the form is associated to a standard process 
and from this we derive its quasi-regularity. Second, this application shows that the theory of 
generalized Dirichlet forms is well-suited for far-reaching perturbation methods as we do not need any sector condition to be verified after the perturbation. 
\\
Our mentioned applications from section \ref{3} are essentially based on the results that we develop in section \ref{2} (see Theorem \ref{thm2.4}) 
about the sufficient conditions which ensure that the standardness  property is transfered from a right (Markov) process to a second one.
We use several analytic and probabilistic potential theoretical tools 
(as  implemented in \cite{BeBo 04}, \cite{BeBo 05}, and \cite{BeBoRo 06}; see also \cite{BeRo 11} for further applications) 
like  the capacities associated to a right process and their  tightness property,
the quasi-continuity, Ray topologies and compactifications, and the fine topology. 
We complete the second section with the treatment of the weak duality hypothesis frame  (Theorem \ref{thm2.5}).

.

\section{Standardness properties}\label{2}

Let $X =(\Omega, \cf, \cf_t, X_t , \theta_t, P^{x} )$ be a Borel  right (Markov) process
whose state space is a Lusin topological space
$(E, \ct)$, and let $\cu=(U_\alpha)_{\alpha>0}$ be its associated sub-Markovian resolvent.

Let $\beta> 0$ be arbitrary. Recall that the Borel $\sigma$-algebra $\cb$ on $E$ is generated by the set
$\ce (\cu_\beta)$ of all   $\cb$-measurable $\cu_\beta$-excessive functions,
where $\cu_\beta := (U_{\beta +\alpha})_{\alpha>0}$. We further recall that a universally measurable function $f$ is said to be
$\cu_\beta$-excessive if $\alpha U_{\beta +\alpha}f \nearrow f$ pointwise as $\alpha \nearrow \infty$.

Let $\mu$ be a $\sigma$-finite measure on $(E, \cb)$. We say that the right
process $X$ is {\it $\mu$-standard} if for one (and hence all) finite measures $\lambda$ which are equivalent to $\mu$  it possesses left
limits in $E$ $P^\lambda$-a.e. on $[0, \zeta )$ and for every
increasing sequence $(T_n)_n$ of stopping times with $T_n\nearrow
T$ we have $X_{T_n}\longrightarrow X_T$ $P^\lambda$-a.e. on $\{T<
\zeta\}$, $\zeta$ being the lifetime of $X$. If in addition
$\cf_T^\lambda =\bigvee_n \cf_{T_n}^\lambda$ then $X$ is called
{\it $\mu$-special standard} (cf. Section 16 in \cite{GeSh
84}).

A {\it Ray cone} associated with $\cu$ is a convex cone $\calr$ of
bounded$\cb$-measurable, $\cu_\beta$-excessive  functions such that:
\begin{itemize}
	\item[$\bullet$] The cone $\calr$  contains the positive constant functions and is min-stable.
	\item[$\bullet$] $U_\beta ((\calr -\calr)_+) \subset \calr$ and
$U_\alpha (\calr) \subset \calr$  for all $\alpha >0 .$
	\item[$\bullet$] The cone $\calr$ is separable  with respect to the uniform norm.
	\item[$\bullet$] The $\sigma$-algebra on $E$  generated by $\calr$  coincides with  $\cb .$
\end{itemize}

One can show that for every countable set $\ca$ of bounded $\cb$-measurable,
$\cu_\beta$-excessive  functions
there exists a Ray cone including $\ca$.

The topology $\ct_\calr$ on $E$ generated by a Ray cone $\calr$
(i.e. the coarsest topology on $E$ for which every function from
$\calr$ is continuous) is called the {\it Ray topology} induced by
$\calr$.

A Lusin topology on $E$ is called {\it natural} (with respect to
$\cu$) if its Borel $\sigma$-algebra is precisely $\cb$ and it is
smaller than the {\it fine topology} on $E$ (with respect to
$\cu$). We recall that the {\it fine topology} with respect to $\cu$ is the
topology on $E$ generated by all $\cu_\beta$-excessive
functions. The initial topology $\ct$ as well as any Ray topology are
natural. Further note that $\cu$ is the resolvent of a right process
with respect to any natural topology (cf. \cite{BeBo 04}). \\

If $\beta>0$, then for all $u\in \ce(\cu_\beta)$ and every subset
$A$ of $E$ we consider the function
$$
R_\beta^A u := \inf \{ v \in \ce(\cu_\beta)| \, v\geq u \mbox{ on } A \},
$$
called the $\beta$-order {\it reduced function} of $u$ on $A$.
It is known that if $A\in \cb$ then $R^A_\beta u$  is universally
$\cb$-measurable and if moreover $A$ is finely open and $u\in
p\cb$ then $R_\beta^A u\in p\cb$. If $A=E$ we simply write $R_{\beta}u$ instead
of $R_{\beta}^Eu$.\\

Let   $\lambda$ be a finite measure on $(E, \mathcal{B})$. We also fix
a strictly positive, bounded $\cu_\beta$-excessive function
$p_o$  of the form $p_o=U_\beta f_o ,$ with $f_o\in p\cb,$
$0<f_o\leq 1$.

Since  $\cu$ is the resolvent of a right process $X$, the
following fundamental result of G. A. Hunt holds for all $A\in
\cb$ and $u\in \ce(\cu_\beta)$:
$$
R_\beta^A u(x)= E^x(e^{-\beta D_A} u (X_{D_A}))
$$
where $D_A$ is the entry time of $A$, $D_A=\inf \{ t\geq 0 |$ $ X_t\in A \}$;
 see e.g. \cite{DeMe 87}.

It turns out  that the functional $M\longmapsto c^\beta_\lambda(M) ,$ $M\subset E ,$ defined by
$$
c^\beta_\lambda(M)=\inf\{\lambda(R_\beta^G p_o) | \,  G\in\mathcal{T},\;M\subset G \}
$$
is a Choquet capacity on  $(E, \mathcal{T})$.

The capacity $c^\beta_\lambda$ on $(E, \mathcal{T})$
is called {\it tight} provided that there exists an increasing
sequence $(K_n)_n$ of $\mathcal{T}$-compact sets such that
$$
\inf_n c^\beta_\lambda(E\setminus K_n)=0
$$
(or equivalently  $\inf_n R_\beta^{E\setminus K_n}p_o=0$
$\lambda$-a.e.) which is also equivalent to
$$
P^\lambda (\lim_n D_{E\setminus K_n} < \zeta )=0.
$$
In particular, if  the capacity $c^\beta_\lambda$ on $(E, \mathcal{T})$ is tight for
one $\beta>0$ then this happens for all  $\beta>0$. Similarly, the following assertion holds:

\begin{rem}\label{rem2.1}   
 If $(G_n)_n$ is a decreasing sequence of
$\ct$-open sets such that there exists $\beta>0$ with  $\inf_n
c_\lambda^\beta(G_n)=0$ then
the equality holds for all $\beta>0$.
\end{rem}

A set $M$ is called {\it $\lambda$-polar} with respect to $\cu$ provided that there
exists $A\in \cb$, $M\subset A$, such that $T_A = \infty $
$P^\lambda$-a.e., where $T_A$ is the hitting time of $A$,
$T_A=\inf \{ t > 0 |$ $ X_t\in A \}$.

A real valued function $u\in \ce(\cu_\beta)$ is called {\it regular} provided that
for every sequence $(u_n)_n$ in $\ce(\cu_\beta)$, $u_n\nearrow u$, we have
$\inf_n R_\beta (u-u_n)=0$;
see \cite{BeBo 04} for more details on regular excessive functions.
It is known that (see e.g. \cite{DeMe 87}) if the process $X$ is transient, then a bounded function $u\in\ce(\cu)$
is regular if and only if there exists a continuous additive functional having $u$ as potential function.
A real valued  $\cu_\beta$-excessive function $u$ is called  $\lambda${\it -regular} with respect to $\cu_{\beta}$, provided there exists
a regular $\cu_\beta$-excessive function $u'$
such that $u=u'$ $\lambda\circ U_\beta$-a.e.

\begin{prop}\label{prop2.0}  
The following assertions hold.
\begin{itemize}
	\item[(i)] $A\in \cb$ is $\lambda$-polar with respect to $\cu$ and
$\lambda$-negligible if and only if $\lambda(R^A_\beta p_o)=0$.
Consequently if a Borel set is of $c^\beta_\lambda$-capacity zero
then it is $\lambda$-polar and $\lambda$-negligible
	\item[(ii)] Assume that one of the following two conditions is
satisfied:
	
\begin{itemize}
	\item[(ii.a)] The topology $\ct$ is a Ray one.
	\item[(ii.b)] Every $\cu_\beta$-excessive function dominated by $U_\beta f_o$ is $\lambda$-regular.
\end{itemize}
\end{itemize}

\noindent Then $c^\beta_\lambda (A)=\lambda(R^A_\beta p_o)$ for all $A\in \cb$ and in
particular the sets which are $\lambda$-negligible and
$\lambda$-polar are precisely those having $c^\beta_\lambda$-capacity
zero. If condition $(ii.b)$ holds then  the capacity
$c^\beta_\lambda$ is tight in any natural topology.
\end{prop}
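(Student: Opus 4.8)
The plan is to dispose of (i) directly from Hunt's formula and the strict positivity of $p_o$, and to obtain (ii) by approximating the reduced function of a Borel set from outside by those of open supersets, the compact case being the decisive one. For (i), I would integrate Hunt's formula $R_\beta^A p_o(x)=E^x(e^{-\beta D_A}p_o(X_{D_A}))$ against $\lambda$. Since $p_o>0$ everywhere and $e^{-\beta D_A}>0$ on $\{D_A<\infty\}$, the integrand is strictly positive on $\{D_A<\infty\}$, so $R_\beta^A p_o(x)=0$ iff $P^x(D_A<\infty)=0$; hence $\lambda(R_\beta^A p_o)=0$ iff $P^x(D_A<\infty)=0$ for $\lambda$-a.e. $x$. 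I then split according to the position at time $0$: on $\{X_0\in A\}$ one has $D_A=0$, which forces $\lambda(A)=0$ ($\lambda$-negligibility), while on $\{X_0\notin A\}$ one has $D_A=T_A$, so the remaining requirement is $P^x(T_A<\infty)=0$ for $\lambda$-a.e. $x$, i.e. $\lambda$-polarity; this gives the stated equivalence. The \lq\lq consequently\rq\rq\ part is immediate: if $c^\beta_\lambda(A)=0$ there are open $G_n\supset A$ with $\lambda(R_\beta^{G_n}p_o)\to 0$, and $R_\beta^A p_o\le R_\beta^{G_n}p_o$ yields $\lambda(R_\beta^A p_o)=0$, whence $A$ is $\lambda$-polar and $\lambda$-negligible.

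For (ii), the inequality $\lambda(R_\beta^A p_o)\le c^\beta_\lambda(A)$ holds for every $A$, directly from the definition of $c^\beta_\lambda$ and the monotonicity $R_\beta^A p_o\le R_\beta^G p_o$ for open $G\supset A$ (so that equality is in fact trivial on open sets). For the reverse inequality I would reduce to compacts: since $c^\beta_\lambda$ is a Choquet capacity, every Borel, indeed analytic, set is capacitable, giving $c^\beta_\lambda(A)=\sup\{c^\beta_\lambda(K)\mid K\subset A,\ K\ \ct\text{-compact}\}$. Using the monotonicity of $A\mapsto\lambda(R_\beta^A p_o)$ it then suffices to prove $c^\beta_\lambda(K)=\lambda(R_\beta^K p_o)$ for every $\ct$-compact $K$, that is, the outer approximation $\inf\{\lambda(R_\beta^G p_o)\mid G\supset K\ \text{open}\}=\lambda(R_\beta^K p_o)$; indeed this gives $c^\beta_\lambda(A)=\sup_K\lambda(R_\beta^K p_o)\le\lambda(R_\beta^A p_o)$.

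This compact approximation is the heart of the matter and is treated differently under the two hypotheses. Under (ii.a), $\cu$ is the resolvent of a \emph{Ray} process in the topology $\ct$; using that a Lusin space is metrizable I would choose open $G_n\supset K$ with $\overline{G_{n+1}}\subset G_n$ and $\bigcap_n\overline{G_n}=K$. Writing $\tau=\sup_n D_{G_n}\le D_K$ and noting that the entry points satisfy $X_{D_{G_n}}\in\overline{G_n}$, quasi-left-continuity of the Ray process forces $X_\tau\in\bigcap_n\overline{G_n}=K$, whence $\tau=D_K$ and $X_{D_{G_n}}\to X_{D_K}$; since $p_o$ is bounded and $\ct$-continuous, dominated convergence gives $R_\beta^{G_n}p_o\searrow R_\beta^K p_o$ pointwise, and integration against $\lambda$ yields the claim. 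Under (ii.b), the function $R_\beta^K p_o\le p_o$ is $\cu_\beta$-excessive and dominated by $U_\beta f_o$, hence $\lambda$-regular; I would exploit its representation (up to a $\lambda\circ U_\beta$-negligible modification) as the potential of a continuous additive functional, recalled above, to obtain the same outer approximation. The expected main obstacle is exactly this step under the purely analytic assumption (ii.b): with no path regularity (no quasi-left-continuity) available, one must convert the regularity of excessive functions into control of the entry times, showing that the jump of the entry distributions across $\partial K$ carries no $\lambda$-mass.

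Finally, for the tightness assertion under (ii.b), I would use that $\lambda$ is a finite measure on a Lusin space to select an increasing sequence of $\ct$-compacts $F_n$ exhausting $E$ up to a $\lambda$-negligible set, and then upgrade this $\lambda$-tightness to capacity-tightness. Since $p_o=U_\beta f_o$ is a regular potential and each $R_\beta^{E\setminus F_n}p_o\le p_o$ is $\lambda$-regular, the decreasing limit $g:=\inf_n R_\beta^{E\setminus F_n}p_o$, which by Hunt's formula equals $\lim_n E^{\cdot}(e^{-\beta D_{E\setminus F_n}}p_o(X_{D_{E\setminus F_n}}))$, is carried by the set where the process leaves every $F_n$ strictly before its lifetime $\zeta$; regularity forces the purely discontinuous, boundary-escaping part to vanish, so this set is $\lambda$-negligible and $g=0$ $\lambda$-a.e. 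This is the required tightness in the topology $\ct$, and since the equivalent condition $P^\lambda(\lim_n D_{E\setminus F_n}<\zeta)=0$ recalled before Remark \ref{rem2.1} does not depend on the chosen natural topology, it holds in any natural topology.
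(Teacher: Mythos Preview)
Your treatment of (i) is correct and in fact slightly cleaner than the paper's: the equivalence $\lambda(R_\beta^A p_o)=0\Leftrightarrow A$ is $\lambda$-polar and $\lambda$-negligible follows exactly as you say from Hunt's formula and strict positivity of $p_o$, and your monotonicity argument for the ``consequently'' is immediate (the paper instead gives a longer direct proof via a nest, or alternatively passes to a Ray topology).

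For (ii), however, there are genuine gaps. Under (ii.a) you invoke \emph{quasi-left-continuity of the Ray process} to pass from $R_\beta^{G_n}p_o$ to $R_\beta^K p_o$, but this is not available: $X$ is only assumed to be a Borel right process, and the whole point of the subsequent Proposition~\ref{prop2.1} is to obtain $\mu$-standardness (hence quasi-left-continuity) from tightness in a Ray topology --- assuming it here is circular. You also use that $p_o$ is $\ct$-continuous, but $p_o=U_\beta f_o$ with $f_o$ merely Borel need not lie in the Ray cone, so continuity is not given; and metrizability of $E$ is not assumed in Section~\ref{2}. Under (ii.b) you correctly identify the obstacle (no path regularity) but do not resolve it: the CAF representation of a regular potential does not by itself yield the outer approximation $\inf_G \lambda(R_\beta^G p_o)=\lambda(R_\beta^K p_o)$. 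Your tightness argument is similarly incomplete: inner regularity of the finite measure $\lambda$ gives compacts $F_n$ with $\lambda(E\setminus F_n)\to 0$, but this says nothing about $\lambda(R_\beta^{E\setminus F_n}p_o)$, and the passage ``regularity forces the purely discontinuous, boundary-escaping part to vanish'' is not an argument.

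The paper's route is quite different and avoids these issues. For (ii.a) it simply quotes the identity $c^\beta_\lambda(A)=\lambda(R_\beta^A p_o)$ from \cite[Proposition 1.6.3]{BeBo 04}. For (ii.b) it first invokes \cite[Theorem 3.5.2]{BeBo 04} to get tightness of $c^\beta_\lambda$ in a finer Ray topology $\ct_\calr\supset\ct$ (this is the substantial input from $\lambda$-regularity). Then, rather than approximating compacts from outside, it transfers the capacity identity from $\ct_\calr$ to $\ct$ by a compact-restriction trick: given $A$ and $\varepsilon>0$, choose a $\ct_\calr$-compact $K$ with $c^\beta_\lambda(E\setminus K)<\varepsilon/2$ and a $\ct_\calr$-open $G\supset A$ with $c^\beta_\lambda(G)<\lambda(R_\beta^A p_o)+\varepsilon/2$; since $\ct_\calr|_K=\ct|_K$, the set $G_o:=(G\cap K)\cup(E\setminus K)$ is $\ct$-open, contains $A$, and has $c^\beta_\lambda(G_o)\le\lambda(R_\beta^A p_o)+\varepsilon$. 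Tightness in any natural topology then follows because the $\ct_\calr$-compacts are also $\ct$-compacts.
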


\begin{proof}
$(i)$ The first statement in $(i)$ is immediate from the
definitions. For the second we present two proofs, a direct one for the convenience of the reader, and a shorter alternative proof based on known results:\\
{\it Direct proof:} If $c^\beta_\lambda (A)=0$ then there are open sets $U_k\supset A$ with $c^\beta_\lambda (U_k)\to 0$ as $k\to \infty$.
Define $F_k:=E\setminus (\cap_{l\le k}U_l)$. Then $A\subset E\setminus F_k$ for all $k$, hence
\begin{eqnarray*}
P^{\lambda}(T_A<\infty) & \le & P^{\lambda}(D_A<\infty)\\
& \le & \lim_k P^{\lambda}(D_{E\setminus F_k}<\infty)=P^{\lambda}(\lim_k D_{E\setminus F_k}<\infty).
\end{eqnarray*}
We have $c^\beta_\lambda (E\setminus F_k)\le c^\beta_\lambda (U_k)\to 0$. Thus by Lebesgue
$$
0=\lim_k c^\beta_\lambda (E\setminus F_k)=\int_E E^x\left (\int_{\lim_k D_{E\setminus F_k}}^{\infty} e^{-\beta t}f_o(X_t)dt\right )\lambda(dx),
$$
and so $P^{\lambda}(\lim_k D_{E\setminus F_k}<\infty)=0$. Consequently, $A$ is $\lambda$-polar and $\lambda$-negligible.\\
{\it Alternative proof:} (a) If the topology $\ct$ is a Ray one then the assertion is a direct consequence of Proposition 1.6.3 in \cite{BeBo 04}.\\
(b) If the topology $\ct$ is only natural, then there exists a Ray topology $\ct_R$ which is finer than the given topology $\ct$ (we use this procedure in the proof of (ii)). 
Therefore, if a set $A$ has zero capacity w.r.t. the capacity constructed using $\ct$, then $A$ is of zero capacity if we replace $\ct$ by $\ct_R$, hence it is $\lambda$-polar by (a). 

$(ii)$ If $(ii.a)$  holds then the assertion follows from
Proposition 1.6.3 in \cite{BeBo 04}.

Assume that $(ii.b)$ is verified.   Using Proposition 2.1 in
\cite{BeBo 05} we may consider a Ray cone $\calr$ (formed by
$\cu_\beta$-excessive functions) such that the topology
$\ct_\calr$ generated by $\calr$ is finer than the given  natural
topology $\ct$. By $(ii.b)$ and Theorem 3.5.2 in \cite{BeBo 04} it
follows that the capacity $c^\beta_\lambda$ is tight in the Ray
topology $\ct_\calr$ and therefore also in the topology $\ct$.

Let $A\in \cb$ and $\varepsilon
>0$. We consider a $\ct_\calr$-compact set $K$ such that
$c^\beta_\lambda(E\setminus K) < \frac{\varepsilon}{2}$. By the
above considerations
and the definition of $c^\beta_\lambda$
there exists a $\ct_\calr$-open set $G$ such
that $A\subset G$ and $c^\beta_\lambda(G) <  \lambda(R_\beta^A
p_o) + \frac{\varepsilon}{2}$. Let $G_o= (G\cap K)\cup (E\setminus
K)$. Then $A\subset G_o$ and since $\ct_\calr |_K= \ct |_K$, it
follows that the set $G_o$ is $\ct$-open. We have
$c^\beta_\lambda(G_o)\leq c^\beta_\lambda(G\cap K) +
c^\beta_\lambda(E\setminus K)\leq   \lambda(R_\beta^A p_o)+
\varepsilon$ and we conclude that, considering $c^\beta_\lambda$
as a capacity on $(E, \ct)$, we have $c^\beta_\lambda
(A)=\lambda(R_\beta^A p_o).$
\end{proof}

\begin{rem}\label{quasileftandcap}  

\begin{itemize}
	\item[(a)] For the converse of the second statement in Proposition \ref{prop2.0}(i)
in general one needs at least the quasi-left continuity of $X$.
For instance, if $X$ is $\lambda$-standard, then the converse holds. In this sense 
a $\lambda$-standard process connects the analytic capacity related to excessive functions and the process 
capacity related to polar sets.
	\item[(b)] Let $\mu$ be a $\sigma$-finite measure on $E$. 
Recall that the right process  $X$ is 
said to be {\rm $\mu$-tight}  provided  
there exists an increasing sequence $(K_n)_{n\in \N}$ of $\ct$-compact (metrizable) sets in $E$ 
such that $P^{\mu}(\lim_{n\to \infty}D_{E\setminus K_n}<\zeta)=0$. 
In particular, if $\lambda$ is a finite measure on $E$,  then the  $\lambda$-tightness of $X$ is equivalent 
to the tightness of  $c^\beta_\lambda$  for some $\beta>0$ (see explanations right before Remark \ref{rem2.1}).
\item[(c)] Suppose that the right process $X$ has $P^{\mu}$-a.e. left limits in $E$ up to
$\zeta$  and that $E$ is a metrizable
Lusin space. Then $X$ is automatically
$\mu$-tight (see \cite[IV. Theorem 1.15]{mr}).
 In particular,  $X$ is  automatically $\mu$-tight if $X$ is 
$\mu$-standard (since the existence of the left limits up to
$\zeta$ $P^{\mu}$-a.e. is part of the definition of the $\mu$-standardness).

\end{itemize}
\end{rem}

The main argument in the proof of the next result is a modification of the proof of
$ii) \Longrightarrow i)$ from Theorem $1.3$ in \cite{BeBo 05}.

\begin{prop}\label{prop2.1}  
Assume that the topology $\ct$ is generated by a Ray cone $\mathcal{R}$
and let $\mu$  be a $\sigma$-finite measure on $(E, \mathcal{B})$.
Then the following assertions hold.
\begin{itemize}
	\item[(i)] If the process $X$ is  $\mu$-tight,  then it  is $\mu$-standard.
	\item[(ii)] If  the $\beta$-subprocess of $X$ is  $\mu$-standard for some $\beta>0$  
then  $X$ is also   $\mu$-standard.
\end{itemize}
\end{prop}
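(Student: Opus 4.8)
The plan is to verify, for a fixed finite measure $\lambda$ equivalent to $\mu$ (as permitted by the ``one and hence all'' clause in the definition), the two defining requirements of $\mu$-standardness: existence of left limits in $E$ on $[0,\zeta)$, and the convergence $X_{T_n}\to X_T$ along increasing sequences $T_n\nearrow T$ on $\{T<\zeta\}$. The analytic tool underlying everything is that for each $u\in\calr$ the process $t\mapsto e^{-\beta t}u(X_t)$ is a right-continuous $P^\lambda$-supermartingale, right-continuity coming from $u$ being $\cu_\beta$-excessive and $X$ a right process. Since $\calr$ is separable for the uniform norm, I would fix a countable dense subset $\{u_k\}_k$; by a uniform-approximation argument these still generate $\ct=\ct_\calr$ and separate the points of $E$, hence realize a homeomorphic embedding of $(E,\ct)$ into a metrizable compactification $\bar E$.

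For (i) I would first produce left limits in $\bar E$. By the regularity theorem for supermartingales, off a single $P^\lambda$-null set each $e^{-\beta t}u_k(X_t)$ has left limits at all $t>0$ simultaneously, so $X_s$ converges in $\bar E$ as $s\uparrow t$ to a point $\bar X_{t-}$. The role of $\mu$-tightness is exactly to pull these limits back into $E$: by the equivalences recorded just before Remark \ref{rem2.1} together with Remark \ref{quasileftandcap}(b), tightness supplies an increasing sequence $(K_n)_n$ of $\ct$-compacts with $P^\lambda(\lim_n D_{E\setminus K_n}<\zeta)=0$. On the complementary full-measure event, for each $t<\zeta$ there is $n$ with $D_{E\setminus K_n}>t$, whence $X_s\in K_n$ for all $s<t$; as $K_n$ is $\ct$-compact it is closed in $\bar E$, so $\bar X_{t-}\in K_n\subset E$. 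This yields left limits in $E$ throughout $[0,\zeta)$.

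It remains to identify, along $T_n\nearrow T$, the left limit with $X_T$ on $\{T<\zeta\}$, and this quasi-left-continuity is the main obstacle. I would reduce it, via the dense family, to showing $U_\beta f(X_{T_n})\to U_\beta f(X_T)$ for the $\beta$-potentials generating $\calr$. Writing $e^{-\beta t}U_\beta f(X_t)=M_t-A_t$ with $A_t=\int_0^t e^{-\beta r}f(X_r)\,dr$ continuous and $M_t=E^\lambda[\int_0^\infty e^{-\beta r}f(X_r)\,dr\mid\cf_t]$ a uniformly integrable martingale, continuity of $A$ shows that the only possible discontinuity at $T$ is a jump of $M$. The substantive point is that this jump vanishes on $\{T<\zeta\}$: because $\ct$ is generated by the Ray cone $\calr$, the state space carries no branch points for the associated Ray resolvent, and the resulting quasi-left-continuity of the underlying Ray process (through the strong Markov property and optional sampling of $M$ at $T_n\nearrow T$) gives $M_{T-}=M_T$, hence $X_{T_n}\to X_T$ in $(E,\ct)$. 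This is precisely the modification of the argument for $ii)\Rightarrow i)$ in Theorem $1.3$ of \cite{BeBo 05} mentioned above.

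For (ii) I would transfer standardness from the $\beta$-subprocess $X^\beta$ by removing the independent exponential killing. Realize $X^\beta$ on $\Omega\times(0,\infty)$ with law $P^\lambda\otimes\beta e^{-\beta s}\,ds$ and lifetime $\zeta^\beta=\zeta\wedge S$, so that $X^\beta$ and $X$ share trajectories on $[0,S)$; every $(\cf_t)$-stopping time is admissible for $X^\beta$, so the two properties of $\mu$-standardness for $X^\beta$ hold on $[0,\zeta^\beta)$, respectively on $\{T<\zeta\wedge S\}$. A Fubini argument in the killing variable $s$ then removes the constraint: a failure of left limits at some $t_0<\zeta(\omega)$, or of $X_{T_n}\to X_T$ at a point of $\{T<\zeta\}$, would persist for every $s$ in a set of positive $\beta e^{-\beta s}\,ds$-measure (namely $s>t_0$, respectively $s>T$), contradicting the a.e. validity for $X^\beta$. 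Letting the killing time exhaust $[0,\zeta)$ delivers the same properties for $X$. I expect (ii) to be routine relative to (i), the only care being the matching of stopping-time classes and filtrations across the killing.
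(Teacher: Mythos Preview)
Your treatment of (i) tracks the paper's closely: both use that $t\mapsto e^{-\beta t}u(X_t)$ is a bounded right-continuous supermartingale for each $u\in\calr$, obtain left limits in the Ray compactification by separability of $\calr$, and then invoke tightness to force those limits back into $E$ on $[0,\zeta)$. For the quasi-left-continuity step the paper simply cites Theorem~(48.15) of \cite{Sh 88} once left limits in $E$ are secured; your martingale decomposition $e^{-\beta t}U_\beta f(X_t)=M_t-A_t$ is correct but does not by itself close the gap, since $M_{T-}=E^\lambda[M_\infty\mid\cf_{T-}]$ need not equal $M_T=E^\lambda[M_\infty\mid\cf_T]$ without further input. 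Your appeal to ``the resulting quasi-left-continuity of the underlying Ray process'' is then essentially a citation of the same Ray-theoretic fact Sharpe's theorem packages (absence of branch points in $E$ forces $X_{T-}=X_T$ on $\{T<\zeta\}$), so the argument is not wrong but the decomposition is a detour rather than a proof.

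For (ii) the routes genuinely differ. The paper argues that $\mu$-standardness of the $\beta$-subprocess gives $\mu$-tightness of $X$ via Remark~\ref{quasileftandcap}(c) (the Ray topology being metrizable, and tightness being insensitive to the $\beta$-killing by the equivalences preceding Remark~\ref{rem2.1}), and then applies (i). Your direct Fubini argument in the exponential killing variable is also valid and more self-contained; the care you flag about matching stopping-time classes is real but routine, since any $(\cf_t)$-stopping time lifts to the product filtration. The paper's reduction is shorter and reuses (i), while your route avoids the detour through tightness at the cost of a product-space construction.
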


\begin{proof} 
 $(i)$  Suppose that $X$ is  $\mu$-tight and  let $\lambda$ be a finite measure on $E$ which is equivalent with $\mu$.
 By assertion $(b)$ of Remark \ref{quasileftandcap} the capacity $c^\beta_\lambda$ is tight
 and let $(K_n)_n$ be an increasing sequence of  $\ct$-compact subsets
of $E$ such that  
$\inf_nR_\beta^{E\setminus K_n}p_o=0$
$\lambda$-a.e. 
We denote by  $K$ the Ray compactification of $E$
with respect to $\mathcal{R}$ (see, e.g., section 1.5 in \cite{BeBo 04}). 
Since for  every  $u\in\mathcal{R}$
the process  $(e^{-\beta t} u(X_t))_{t\geq 0}$ is a bounded
right continuous supermartingale with respect to the filtration
$(\mathcal{F}_t)_{t\geq 0}$ it follows that (cf. \cite{DeMe 87})
this process has left limits $P^\lambda$-a.e. Since the Ray cone
$\mathcal{R}$ is separable with respect to the uniform norm, it
follows that the process $(X_t)_{t\geq 0}$ has left limits in $K$
$P^\lambda$-a.e. From  $\lim_nR_\beta^{E\setminus K_n}p_o=0$
$\lambda$-a.e. and $\lambda(R_\beta^{E\setminus K_n}p_o)=$
$E^\lambda(\int_{T_{E\setminus K_n}}^\zeta e^{-\beta t} f_o(X_t) dt)$
we deduce that  $\sup_nT_{E\setminus K_n}\geq\zeta$
$P^\lambda$-a.e. 
Hence for every $\omega\in \Omega$ with
$T_{E\setminus K_n}(\omega)<\zeta(\omega)$ we have $X_t(\omega)\in
K_n$ provided that  $t< T_{E\setminus K_n}(\omega)$ and so
$X_{t-}(\omega)\in K_n$. Consequently the process  $(X_t)_{t\geq
0}$ has left limits in $E$ $P^\lambda$-a.e. on $[0, \zeta)$. By
Theorem (48.15) in \cite{Sh 88}  we get that  the ($0$)-process is
$\lambda$-standard.

$(ii)$ If a $\beta$-subprocess of $X$  is  $\mu$-standard, then from Remark
\ref{quasileftandcap} $(c)$  (see also  \cite{LyRo 92} and \cite{BeBo 05})  
it follows that $X$ is $\mu$-tight and by  $(i)$  we get that $X$ is $\mu$-standard.
\end{proof}

A set $M\in \cb$ is called {\it $\lambda$-inessential} (with
respect to $\cu$) provided that it is $\lambda$-negligible and
$R^M_\beta 1=0$ on $E\setminus M$.

\begin{rem}\label{rem2.2}   
If $M\in \cb$ is a $\lambda$-inessential set then we may consider
the restrictions $X|_F$ of the process $X$ to the "absorbing set"
$F:=E\setminus M$. Note that $U_\beta (1_M)=0$ on $F$ and the
resolvent associated with $X|_F$ is precisely the restriction
$\cu|_F$ of $\cu$ to $F$. The following assertions hold:
\begin{itemize}
	\item[(a)] $\ce(\cu_\beta|_F)=\ce(\cu_\beta )|_F$. In particular the fine
topology on $F$ with respect to $\cu|_F$ is the trace on $F$ of
the fine topology on $E$ with respect to $\cu$. The process $X$ is
$\lambda$-standard if and only if $X|_F$ is $\lambda|_F$-standard.
	\item[(b)] {\bf Trivial modification.} We consider the {\rm trivial modification of $\cu$
on $M$} (see e.g.  \cite{BeBo 04} and \cite{BeBoRo 06}), namely the sub-Markovian resolvent
$\cu'=(U'_\alpha)_{\alpha>0}$ on $(E, \cb)$
defined by:
$$
U'_\alpha f= 1_E U_\alpha (f 1_F) + \frac{1}{\alpha} f 1_M, \quad \alpha>0, \ f \in \cb.
$$
Then $\cu'$ is the resolvent of a right (Markov) process with
state space $E$. A function $u\in p\cb$ belongs to
$\ce(\cu'_\beta)$ if and only if $u|_F \in \ce(\cu_\beta|_F)$.
Consequently by $(a)$ we have: a subset $\Gamma$ of $E$ is finely
open with respect to $\cu'$ if and only if there exists
a finely open set $\Gamma_o$ with respect to $\cu$, such that $\Gamma\cap F=
\Gamma_o \cap F$. In particular every topology on $E$ which is
natural with respect to $\cu$ is also natural with respect to
$\cu'$.
\end{itemize}

\end{rem}

Let  $\cv=(V_\al)_{\al>0}$ be a second sub-Markovian resolvent of
kernels on $E$ and assume that it is also the resolvent of a right
(Markov) process $Y$ with state space $E$.
Let $\mu$ be a $\sigma$-finite Borel measure on the Lusin space $(E, \ct)$ that has full support.
We suppose that there is a semi-Dirichlet form
$(\ca, D(\ca))$ on $L^2(E, \mu)$ 
with associated $L^2(E,\mu)$-resolvent $(H_{\alpha})_{\alpha>0}$ (see \cite{MaOvRo}).
We further assume that $V_{\alpha} f$ is a $\mu$-version of $H_{\alpha} f$
for any $f\in L^2(E,\mu)$, and $\alpha>0$, i.e. that the right process $Y$ is associated with $(\ca, D(\ca))$.
In particular
$$
H_{\alpha}(L^2(E, \mu))\subset
D(\ca) \mbox{ densely,}
$$
and
$$
\ca_{\alpha}(H_{\alpha}f, u)=(f,u)_{L^2(E, \mu)}
$$
for all $\alpha >0$, $f \in L^2(E, \mu)$, and $u\in  D(\ca)$, where $\ca_{\alpha}(\cdot,\cdot):=
\ca(\cdot,\cdot)+\alpha(\cdot,\cdot)_{L^2(E, \mu)}$.\\

Let $K>0$ be the sector constant of $(\ca, D(\ca))$, i.e.
$$
\left |\ca_1(u,v)\right |\le K \ca_1(u,u)^{1/2}\ca_1(v,v)^{1/2}\ \ \ \mbox{for all }u,v\in D(\ca).
$$

We suppose  further that the measures $\lambda$ and $\mu$ are
equivalent. We may and will assume  that $\lambda= f_o\cdot \mu$.

Let $\mbox{cap}^\beta_{\lambda}$ be the capacity corresponding to the resolvent 
$(V_\al)_{\al>0}$, i.e. for $M\subset E ,$ it is defined by
$$
\mbox{cap}^\beta_{\lambda}(M)=
\inf\{\lambda(\overline{R}_\beta^G V_{\beta}f_o) | \,  G\in\mathcal{T},\;M\subset G \},
$$
where $\overline{R}^G_{\beta}$ is defined as $R^G_{\beta}$ but w.r.t. $\cv_{\beta}$, 
i.e.,  $\overline{R}^G_\beta u$ denotes the reduced function of $u$ on $G$ with respect to 
$\ce(\cv_\beta)$. 
Analogously to Remark \ref{rem2.1} these capacities are all equivalent for any $\beta>0$. 
Note further that for 
open sets $G$, $\mbox{cap}^\beta_{\lambda}(G)$ coincides with the 
so-called   $f_o$-capacity of $G$ associated with the 
semi-Dirichlet form $(\ca, D(\ca))$.\\

A real valued function on $E$ is called {\it $\lambda$-fine} with
respect to $\cu$ provided it is finely continuous with respect to
$\cu$ outside a $\lambda$-inessential set. Note that a real
valued function $f$ on $E$ is $\lambda$-fine with respect to $\cv$ if
and only if there exists an increasing sequence $(F_n)_n$ of
finely closed sets with respect to $\cv$, such that $\inf_n
\mbox{cap}^\beta_{\lambda}(E\setminus F_n)=0$ and $f|_{F_n}$ is finely continuous for all
$n$.

An increasing sequence of $\ct$-closed sets $(F_n)_{n \in \N}$ is called 
{\it $c_\lambda^\beta$-nest}
if
$$
\lim_{n\to \infty}c_\lambda^\beta(E\setminus E_n)=0.
$$

\vspace{2mm}

We consider the following conditions on $\cu$ and $\cv$:
\begin{enumerate}
	\item[(A1)] The sets which are $\lambda$-polar and
$\lambda$-negligible are the same for $\cu$ and $\cv$.
	\item[(A2)]  Every  $\mbox{cap}^\beta_{\lambda}$-nest is a   $c^\beta_{\lambda}$-nest.
	\item[(B1)] The function $U_\beta f$ is $\lambda$-fine with
respect to $\cv$ for every
\mbox{$f\in bp\cb$.}
	\item[(B2)] There exists a bounded strictly positive $ \cu_\beta$-excessive function  $u_o$ such that every
 $ \cu_\beta$-excessive function dominated by $u_o$ is $\lambda$-fine with respect to $\cv$.\\
\end{enumerate}

\begin{rem}   \label{conditions} 
\begin{itemize}
	\item[(a)] Clearly condition $(B1)$ does not depend on $\beta >0$ and $(B2)$ for $u_o=U_\beta 1$ implies $(B1)$.
	\item[(b)] Since the resolvent $\cv$ satisfies condition $(ii.b)$ from
Proposition \ref{prop2.0} (cf. \cite{BeBo 04}), we deduce that the
following assertions are equivalent for a set $M$:
\begin{itemize}
	\item[--] the set $M$ is $\mu$-polar with respect to $\cv$ (in this case $M$ is also $\mu$-negligible);
	\item[--] the set  $M$ is $\lambda$-negligible and $\lambda$-polar with
respect to $\cv$;
\item[--] $\mbox{cap}^\beta_{\lambda}(M)=0$.
\end{itemize}
\item[(c)] By Remark \ref{rem2.1} it follows that condition $(A2)$ does not depend on $\beta >0$.
	\item[(d)] If condition $(A2)$ holds then:
\begin{enumerate}
	\item[--] A set which is  $\lambda$-polar with respect to $\cv$ is also
$\lambda$-polar with respect to $\cu$;
	\item[--] If the capacity $\mbox{cap}^\beta_{\lambda}$ on $(E, \ct)$ is tight then
$c_\lambda^\beta$ is also tight.\\
\end{enumerate}
\end{itemize}
\end{rem}
A function $g\in\cb$ is called {\it $c_\lambda^\beta$-quasi-continuous} 
(in short $c_\lambda^\beta$-q.c.)  provided  there exists a $c_\lambda^\beta$-nest 
 $(F_n)_{n \in \N}$ such that  for each $n$ $g|_{F_n}$ is continuous on $F_n$.  

\begin{thm}\label{thm2.4}  
Assume that condition $(A2)$ is satisfied. Then the following assertions hold.
\begin{itemize}
	\item[(i)] If $(B1)$ is verified then there exists a Lusin topology on
$E$ which is natural for $\cu$ and for a trivial modification
$\cv'$ of $\cv$, such that the right processes having $\cu$ and
$\cv'$ as associated resolvents respectively are
$\mu$-standard.
	\item[(ii)] If $(B2)$ is verified then the right processes $X$ having
$\cu$ as associated resolvent is $\mu$-standard in the
original topology $\ct$. In addition, $U_{\alpha} f$ is $c_\lambda^{\beta}$-q.c. for any $f\in b\cb$ and $\alpha>0$.
\end{itemize}
\end{thm}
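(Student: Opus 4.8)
The plan is to reduce everything to the tightness criterion of Proposition \ref{prop2.1}(i): if I can exhibit a Ray topology for $\cu$ that is at the same time natural for $\cv$, then the tightness furnished by the semi-Dirichlet form transfers to $\cu$. Concretely, since $Y$ is associated with $(\ca, D(\ca))$, condition $(ii.b)$ of Proposition \ref{prop2.0} holds for $\cv$ (Remark \ref{conditions}(b)), so $\mbox{cap}^\beta_\lambda$ is tight in \emph{every} natural topology. Thus, once I produce a Ray cone $\calr$ for $\cu$ whose Ray topology $\ct_\calr$ is natural for $\cv$, the capacity $\mbox{cap}^\beta_\lambda$ is tight in $\ct_\calr$; condition $(A2)$ together with Remark \ref{conditions}(d) promotes this to tightness of $c^\beta_\lambda$ in $\ct_\calr$, i.e. to $\mu$-tightness of $X$, and Proposition \ref{prop2.1}(i) then yields that $X$ is $\mu$-standard. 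The entire difficulty is therefore the construction of $\ct_\calr$.

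For part $(i)$ I would first choose any Ray cone $\calr$ for $\cu$. By $(B1)$ each generating potential $U_\beta f$, and then --- since $\lambda$-fineness with respect to $\cv$ is preserved under $\min$, positive linear combinations, the operators $U_\alpha$ and $U_\beta((\cdot-\cdot)_+)$, and uniform limits --- every element of $\calr$ is $\lambda$-fine with respect to $\cv$, that is, finely continuous off a $\cv$-inessential set. Because $\calr$ is uniformly separable, the countably many exceptional sets may be combined into a single $\cv$-inessential set $M$; I then pass to the trivial modification $\cv'$ of $\cv$ on $M$ (Remark \ref{rem2.2}(b)), so that on the absorbing set $F:=E\setminus M$ all functions of $\calr$ are $\cv'$-finely continuous. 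Consequently $\ct_\calr$ is coarser than the $\cv'$-fine topology, hence natural for $\cv'$, while being a Ray topology for $\cu$ and Lusin. The reduction above makes $X$ $\mu$-standard in $\ct_\calr$. For the $\cv'$-process $Y'$, which is $\mu$-tight in $\ct_\calr$ by the tightness of $\mbox{cap}^\beta_\lambda$, I would use that $\ct_\calr$ is natural for $\cv'$ to select a Ray cone $\cs$ for $\cv'$ with $\ct_\cs$ finer than $\ct_\calr$, apply Proposition \ref{prop2.1}(i) to obtain $Y'$ $\mu$-standard in $\ct_\cs$, and then transfer this to the coarser $\ct_\calr$, since both the left limits in $E$ and the convergence $Y'_{T_n}\to Y'_T$ only improve under coarsening of a natural Lusin topology.

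For part $(ii)$ the stronger hypothesis $(B2)$ lets me refine the construction so as to reach the \emph{original} topology. Since $u_o$ is bounded, strictly positive and $\cu_\beta$-excessive, the cone of $\cu_\beta$-excessive functions dominated by multiples of $u_o$ is large enough to generate a Ray topology $\ct_\calr$ for $\cu$ that is finer than $\ct$, while by $(B2)$ every such function is still $\lambda$-fine with respect to $\cv$; exactly as in $(i)$ this renders $\ct_\calr$ natural for $\cv$ after a trivial modification on a $\lambda$-negligible set, a modification that affects neither $c^\beta_\lambda$ nor $X$. The reduction gives $X$ $\mu$-standard in $\ct_\calr$, and since $\ct\subset\ct_\calr$ with both natural and Lusin, left limits in $\ct_\calr$ are a fortiori left limits in $\ct$, whence $X$ is $\mu$-standard in $\ct$. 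For the quasi-continuity of $U_\alpha f$ I would take, from the tightness of $c^\beta_\lambda$ in $\ct_\calr$, an increasing sequence of $\ct_\calr$-compacts $(K_n)_n$ with $\inf_n c^\beta_\lambda(E\setminus K_n)=0$; on each $K_n$ one has $\ct_\calr|_{K_n}=\ct|_{K_n}$, and $U_\alpha f$ is $\ct_\calr$-continuous because the Ray cone is stable under the resolvent, so $U_\alpha f|_{K_n}$ is $\ct$-continuous and $(K_n)_n$ is a $c^\beta_\lambda$-nest, giving that $U_\alpha f$ is $c^\beta_\lambda$-q.c.

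The main obstacle is precisely this topological bookkeeping: producing one Ray topology for $\cu$ that is simultaneously natural for $\cv$, fine enough to dominate $\ct$ in case $(ii)$, and compatible with the trivial modification absorbing the $\cv$-inessential exceptional set --- together with the verification that $\lambda$-fineness propagates through all the cone operations, so that the union of exceptional sets remains $\cv$-inessential. Everything else is a routine application of the tightness criterion and of the comparison of standardness across comparable natural topologies.
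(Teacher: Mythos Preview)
Your argument for part $(i)$ matches the paper's approach closely; the only minor imprecision is the direct appeal to Remark \ref{conditions}$(d)$ when transferring tightness from $\mbox{cap}^\beta_\lambda$ to $c^\beta_\lambda$ in the topology $\ct_\calr$, since $(A2)$ is phrased in terms of $\ct$-closed nests. The paper handles this by first approximating the $\ct_\calr$-compacts from outside by $\ct$-open sets $G_n$ via Proposition \ref{prop2.0}, so that $(E\setminus G_n)_n$ is a $\ct$-closed nest to which $(A2)$ literally applies.

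For part $(ii)$, however, there is a genuine gap in your construction of the Ray cone. You assert that ``the cone of $\cu_\beta$-excessive functions dominated by multiples of $u_o$ is large enough to generate a Ray topology $\ct_\calr$ for $\cu$ that is finer than $\ct$''. But a Ray cone must contain the positive constants, and $1\le c\,u_o$ would force $\inf_E u_o>0$, which hypothesis $(B2)$ does \emph{not} grant ($u_o$ is only strictly positive). Hence, in general, no Ray cone for $\cu$ can sit inside the class of functions dominated by multiples of $u_o$, and $(B2)$ gives you no control over the $\cv$-fineness of the remaining elements of the cone. The paper circumvents this by passing to the Doob $u_o$-transform $\cu^o$, for which the bounded $\cu^o_\beta$-excessive functions are exactly the functions of the form $v/u_o$ with $v\in\ce(\cu_\beta)$, $v\le \|v/u_o\|_\infty\,u_o$; thus $(B2)$ turns into the statement $(B2')$ that \emph{every} bounded $\cu^o_\beta$-excessive function is $\lambda$-fine for $\cv$. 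One then builds a Ray cone $\calr$ for $\cu$ with $\ct_\calr\supset\ct$ and a second Ray cone $\calr^o$ for $\cu^o$ with $\ct_{\calr^o}\supset\ct_\calr$; it is $\ct_{\calr^o}$ that is natural for the trivial modification $\cv'$, so $\mbox{cap}^\beta_\lambda$ is tight there, and the $\ct_{\calr^o}$-compacts are automatically $\ct_\calr$-compact, giving tightness of $c^\beta_\lambda$ in $\ct_\calr$ and hence $\mu$-standardness of $X$ in $\ct$. A second, smaller gap concerns your quasi-continuity argument: the claim ``$U_\alpha f$ is $\ct_\calr$-continuous because the Ray cone is stable under the resolvent'' is false for arbitrary $f\in b\cb$; stability gives continuity only for $f$ built from $\calr$. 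The paper instead observes that once $X$ is $\mu$-standard, $U_\beta f$ is $\lambda$-regular and hence $c^\beta_\lambda$-q.c.\ in $\ct_\calr$ by Proposition~3.2.6 of \cite{BeBo 04}; the passage to $\ct$ via $\ct|_{K_n}=\ct_\calr|_{K_n}$ is then exactly as you describe.
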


\begin{proof}
We have already observed that  $\cv$ (being the resolvent of a
semi-Dirichlet form)  satisfies condition $(ii.b)$ from
Proposition \ref{prop2.0}:

every $\cv_\beta$-excessive function dominated by $V_\beta f_o$ is $\lambda$-regular.\\
Consequently (see also Theorem 3.5.2 in \cite{BeBo 04}),   
the capacity $\mbox{cap}^\beta_{\lambda}$ is tight and the right process having $\cv$
as associated resolvent is $\mu$-standard in any topology
which is natural with respect to $\cv$.

$(i)$ According with the usual method of constructing  Ray cones
(see e.g. \cite{BeBo 04}) and using hypothesis $(B1)$, there
exists a Ray cone $\calr$ associated with $\cu$ such that every
$u\in\calr$ is $\lambda$-fine with respect to $\cv$. Let $\calr_o$ be
a countable subset of $\calr$ which is dense in $\calr$ in the
uniform norm. Let $M_o$ be a set which is $\mu$-polar with
respect to $\cv$, and such that every $u$ from $\calr_o$ is finely
continuous with respect to $\cv$ outside $M_o$. From \cite{BeBo
04}, page 168, it follows that there exists a  set $M$ which is
$\lambda$-inessential with respect to $\cv$ such that $M_o\subset M$.
We consider now the trivial modification $\cv'$ of $\cv$ on $M$.
Note that $\cv'$ is also the resolvent of the (quasi-regular)
semi-Dirichlet form $(\ca, D(\ca))$ on $L^2(E, \mu)$, in
particular  $\cv'$ satisfies condition $(ii.b)$ and if $A\in \cb$
then $\mbox{cap}^\beta_{\lambda}(A)=
\mbox{cap}^\beta_{\lambda}(A\setminus M)=  \lambda ('\!
R^{A\setminus M}_\beta V'_\beta f_o)=\lambda ('\! R^{A}_\beta
V'_\beta f_o)$; here $'\!R^{A}_\beta$ denotes the reduction
operator on $A$  with respect to $\cv'_\beta$. By assertion $(b)$
of Remark \ref{rem2.2} we deduce that the topology $\calr$ is
natural with respect to $\cv'$ and we conclude that the capacity
$\mbox{cap}^\beta_{\lambda}$ is tight in $\ct_\calr$. Let
$(K_n)_n\subset E$ be an increasing sequence of
$\ct_\calr$-compact sets such that
$\mbox{cap}^\beta_{\lambda}(E\setminus K_n)= \lambda ('\!
R^{E\setminus K_n}_\beta V_\beta f_o)\leq \frac{1}{2^n}$ for all
$n$. From Proposition \ref{prop2.0} we get for every $n$ a
$\ct$-open set $G_n$ such that $G_{n-1}\supset G_n \supset
E\setminus K_n$ and  $\mbox{cap}^\beta_{\lambda}(G_n)\leq
\mbox{cap}^\beta_{\lambda}(E\setminus K_n) + \frac{1}{2^n}$. By
$(A2)$ we deduce that $\inf_n c_\lambda^\beta(G_n)=0$ and
consequently  the capacity $c_\lambda^\beta$ is also tight in
$\ct_\calr$. The claimed $\mu$-standardness property follows
now by the above considerations and Proposition \ref{prop2.1}.

$(ii)$ Passing to the $\beta$-level of the resolvent $\cu$ and
according with Proposition \ref{prop2.1},  we may assume that $u_o\in
\ce(\cu)$. We consider now the "Doob $u_o$-transform of $\cu$",
namely the sub-Markovian resolvent of kernels
$\cu^o=(U^o_\alpha)_{\alpha>0}$ defined by
$$
U^o_\alpha f= \frac{1}{u_o} U_\alpha (u_o f), \quad f\in p\cb, \
\alpha>0.
$$
The following assertions hold.

-- If $v\in p\cb$ then: $v\in \ce(\cu_\beta)$ if and only if
$\frac{v}{u_o}\in \ce(\cu^o_\beta)$.

--  A $\sigma$-finite measure $\xi$ on $(E, \cb)$ is
$\cu_\beta$-excessive (i.e., $\xi\circ \alpha U_{\beta+\alpha}
\leq \xi$ for all $\alpha>0$) if and only if the measure
$\frac{1}{u_o}\cdot \xi$ is $\cu^o_\beta$-excessive. Moreover
$\xi$ is a potential with respect to $\cu_\beta$ (i.e., $\xi=\nu
\circ U_\beta$, where $\nu$ is a $\sigma$-finite measure on $E$)
if and only if $\frac{1}{u_o}\cdot \xi$ is a potential with
respect to $\cu^o_\beta$. Consequently, $\cu^o$ is the resolvent of
a Borel right process with state space $E$ (cf. Section 1.8 in
\cite{BeBo 04} and \cite{BeBoRo 06}).

-- The fine topologies  on $E$ with respect to $\cu$ and $\cu^o$
coincide. In particular, a topology on $E$ is simultaneously
natural with respect to $\cu$ and $\cu^o$. The
$\lambda$-inessential sets with respect to $\cu$ and $\cu^o$ are
the same.

-- $\cu^o$  and $\cv$ satisfy  condition $(A2)$. Condition $(B2)$ implies
that:

\noindent $(B2')\quad$ Every bounded  $ \cu^o_\beta$-excessive
function is $\lambda$-fine with respect to $\cv$.

As in the proof of Proposition \ref{prop2.0} we consider  a Ray
cone $\calr$ (formed by $\cu_\beta$-excessive functions) such that
the topology $\ct_\calr$ generated by $\calr$ is finer than the
original topology $\ct$. Similarly, there exists a Ray cone
$\calr^o$ (formed by $\cu^o_\beta$-excessive functions) such that
the topology $\ct_{\calr^o}$ generated by $\calr^o$ is finer than
$\ct_\calr$. As before,  we consider $\calr^o_o$, $M_o$, $M$ and
$\cv'$. From $(B2')$ it follows that $\ct_{\calr^o}$ is natural
with respect to $\cv'$. Like in the proof of assertion $(i)$ we
get now that the capacity $\mbox{cap}^\beta_{\lambda}$ is tight in
$\ct_{\calr^o}$ and since $\ct_{\calr} \subset \ct_{\calr^o}$ we
deduce by $(A2)$ that the capacity $c^\beta_\lambda$ is  tight in
$\ct_{\calr}$. Consequently, Proposition \ref{prop2.1} implies the
$\lambda$-standardness property for $X$ in $\ct_\calr$ and clearly
also in the original topology $\ct$.

Let now $f\in bp\cb$. It is sufficient to prove the second assertion 
of $(ii)$ for $\alpha=\beta$.
Since $U_\beta f$ is a $\lambda$-regular $\cu_\beta$-excessive function, 
we deduce by  Proposition 3.2.6 from  \cite{BeBo 04}
that it is $c_\lambda^\beta$-q.c. in $\ct_{\cal R}$. Let $(K_n)_{n\in \N}$ 
be a $c_\lambda^\beta$-nest of $\ct_{\cal R}$-compact sets, such that
$U_\beta f |_{K_n}$ is continuous for each $n$. 
Because $\ct|_{K_n}=  \ct_{\cal R}|_{K_n}$ for all $n$, we conclude that
$U_\beta f$ is  also $c_\lambda^\beta$-q.c. in $\ct$.
\end{proof}

Recall that the {\it weak duality hypothesis}
(with respect to the topology $\ct$ and a measure
$m$) is satisfied by $\cu$ and  a sub-Markovian resolvent
$\cu^*=(U^*_\alpha)_{\alpha>0}$ provided that $\cu^*$ is
also the resolvent of a right Markov process with state space $E$
and for all $f, g\in p\cb$, $\alpha >0$ we have
$$
\int f U_\alpha g \ dm = \int gU^*_\alpha f \ dm.
$$
(see e.g. \cite{GeSh 84} and \cite{BeBo 04}).

Note that $\cv$ (being the resolvent of a semi-Dirichlet form) fits in the frame of the weak 
duality, by choosing an appropriate measure $m$  which is equivalent with $\mu$; see section 7.6 in \cite{BeBo 04}.\\

We assume further that  the measure $\lambda$ is absolutely
continuous with respect to $m$.

\begin{remark}\label{weakdualityhyp}  
If the weak duality hypothesis is satisfied by $\cu$ and $\cu^*$
(with respect to the topology $\ct$ and the measure $m$) then the
capacity $c^\beta_\lambda$ is tight in the topology $\ct$ (see
\cite{BeBo 05}). Moreover, by the result of J.B. Walsh \cite{Wa
72} the process $X$ has c\`adl\`ag trajectories.
\end{remark}

The next theorem shows that, under the weak duality hypothesis for $\cu$, 
it is $(A1)$ the adequate condition leading to the standardness property of the process $X$.

\begin{thm}\label{thm2.5} 
Suppose that the {\it weak duality hypothesis} (with respect to
the topology $\ct$ and the measure $m$) is satisfied by $\cu$ and
$\cu^*$. Assume that condition $(A1)$ is verified by  $\cu^*$ and
$\cv$, and that every point of $E$ is $\mu$-polar with respect to
$\cv$. Then the following assertions hold.
\begin{itemize}
	\item[(i)] The right process $X$ having $\cu$ as associated resolvent
is $\mu$-standard in the topology $\ct$.
	\item[(ii)] Every $\cu_\beta$-excessive function dominated by $p_o$ has
a $c^\beta_\lambda$-quasi continuous version.
\end{itemize}
\end{thm}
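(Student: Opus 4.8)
The plan is to obtain the two constituents of $\mu$-standardness---existence of left limits in $E$ on $[0,\zeta)$ and the convergence $X_{T_n}\to X_T$ along increasing sequences of stopping times---separately, drawing the first from the weak duality and the second from the polarity hypotheses, and then to deduce $(ii)$ as in Theorem \ref{thm2.4}$(ii)$.

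First I would record what weak duality supplies on its own. By Remark \ref{weakdualityhyp} the weak duality hypothesis for $\cu$ and $\cu^*$ forces the capacity $c^\beta_\lambda$ to be tight in the original topology $\ct$ and, through Walsh's theorem, makes $X$ have c\`adl\`ag trajectories $P^\lambda$-a.e. In particular $X$ has left limits in $E$ on $[0,\zeta)$ $P^\lambda$-a.e., and by Remark \ref{quasileftandcap}$(b)$ the process $X$ is $\lambda$-tight (equivalently $\mu$-tight, since $\lambda\sim\mu$). Thus the ``left limits in $E$'' half of $\mu$-standardness already holds in $\ct$, and only the quasi-left continuity along increasing stopping times remains to be proved.

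The heart of the matter is to produce that quasi-left continuity from condition $(A1)$ and the polarity of points. Because $\cv$ is the resolvent of a semi-Dirichlet form, Remark \ref{conditions}$(b)$ identifies, for $\cv$, the $\mu$-polar sets with the sets that are simultaneously $\lambda$-negligible and $\lambda$-polar; hence the assumption that every point of $E$ is $\mu$-polar with respect to $\cv$ means that every singleton is $\lambda$-negligible and $\lambda$-polar for $\cv$. Condition $(A1)$ for $\cu^*$ and $\cv$ then transfers this to the dual resolvent: every singleton of $E$ is $\lambda$-negligible and $\lambda$-polar with respect to $\cu^*$. I would now invoke the duality machinery (as in \cite{GeSh 84}, \cite{BeBo 04}, \cite{BeBo 05}) by which, in weak duality, an accessible (predictable) jump of $X$ forces a point involved in that jump to be non-$\lambda$-polar for the dual resolvent $\cu^*$; since no point of $E$ has that property, $X$ can have no accessible jumps, so that $X_{T_n}\to X_T$ $P^\lambda$-a.e. on $\{T<\zeta\}$ for every increasing sequence $T_n\nearrow T$. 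I expect this ``all points co-polar $\Rightarrow$ quasi-left continuity'' implication to be the real obstacle: it is the only place where the full strength of weak duality, rather than the tightness extracted from it, is used, and it must be argued at the level of trajectories and predictable times rather than of capacities. Combining the left limits of the previous paragraph with this quasi-left continuity, Theorem $(48.15)$ in \cite{Sh 88} then yields that $X$ is $\lambda$-standard, hence $\mu$-standard in $\ct$, which is $(i)$.

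For $(ii)$ I would follow, in spirit, the argument of Theorem \ref{thm2.4}$(ii)$. Weak duality, together with the standardness just obtained, makes every $\cu_\beta$-excessive function $u$ dominated by $p_o=U_\beta f_o$ a $\lambda$-regular $\cu_\beta$-excessive function; by Proposition $3.2.6$ in \cite{BeBo 04} such a $u$ is then $c^\beta_\lambda$-quasi-continuous in a Ray topology $\ct_\calr$ finer than $\ct$, along a $c^\beta_\lambda$-nest $(K_n)_n$ of $\ct_\calr$-compact sets. Since $\ct|_{K_n}=\ct_\calr|_{K_n}$ on each compact of the nest, restriction to the $K_n$ shows that $u$ is already $c^\beta_\lambda$-quasi-continuous in $\ct$, which furnishes the required version and completes the proof.
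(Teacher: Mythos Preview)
Your route to quasi-left continuity has a genuine gap. From $(A1)$ and the $\mu$-polarity of points for $\cv$ you correctly extract that every singleton is $\lambda$-polar for $\cu^*$. But the duality mechanism linking predictable jumps of $X$ to the dual potential theory does \emph{not} operate point by point: what controls the accessible jumps of $X$ is a \emph{semipolar} set for $\cu^*$ (the set of irregular or left-branch points), and to exclude such jumps one needs the full axiom of $\lambda$-polarity for $\cu^*$---every semipolar set is $\lambda$-polar---not merely that singletons are polar. A semipolar set is a countable union of totally thin sets, not of points, so your implication ``co-polar points $\Rightarrow$ no accessible jumps'' is unjustified as stated.

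The paper closes precisely this gap by a Dellacherie-measure transfer. For a semipolar $M$ with respect to $\cu^*$, Proposition 1.7.27 and Corollary 3.2.16 of \cite{BeBo 04} produce a measure $\nu$ carried by $M$ which detects the $\lambda$-polar-and-negligible subsets of $M$; by $(A1)$ the same $\nu$ is a Dellacherie measure for $\cv$, so $M$ is $\mu$-semipolar for $\cv$. The decisive input---entirely absent from your sketch---is that semi-Dirichlet forms satisfy Hunt's hypothesis (semipolar $\Rightarrow$ polar); this makes $M$ $\mu$-polar for $\cv$, and $(A1)$ returns it as $\lambda$-polar for $\cu^*$. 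With the axiom of $\lambda$-polarity for $\cu^*$ in hand, Theorem 7.2.9 of \cite{BeBo 04} yields assertion $(ii)$ directly, and then Theorem 3.5.2, Proposition 3.5.3 of \cite{BeBo 04} together with Proposition \ref{prop2.1} give $(i)$. Note also that the paper's logic runs $(ii)\Rightarrow(i)$, whereas your argument for $(ii)$ presupposes that every $\cu_\beta$-excessive function dominated by $p_o$ is $\lambda$-regular---a fact you have not justified, since standardness of $X$ alone does not deliver it; in the paper it is a duality consequence of the co-axiom of polarity just established.
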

\begin{proof}
We show that the axiom of $\lambda$-polarity holds for $\cu^*$
(i.e., every semipolar set is $\lambda$-polar with respect to
$\cu^*$). Let $M\in \cb$ be a semipolar set. By Proposition 1.7.27
and Corollary 3.2.16 in \cite{BeBo 04} there exists a measure
$\nu$ carried by $M$ such that a subset of $M$ is $\lambda$-polar
and $\lambda$-negligible if and only if it is $\nu$-negligible;
such a measure $\nu$ is called {\it Dellacherie measure}. By
condition $(A1)$ it follows that $\nu$ is a Dellacherie measure
with respect to $\cv$ and consequently $M$ is a $\mu$-semipolar
set. Since the axiom of $\mu$-polarity holds for the
semi-Dirichlet forms (see e.g. \cite{Fi 01} and Corollary 7.5.20 from \cite{BeBo 04}) 
we
conclude that the set $M$ is $\mu$-polar and therefore (by Remark \ref{conditions} $(b)$)
it is  $\lambda$-negligible and $\lambda$-polar  with respect to $\cv$. 
Again from $(A1)$ we conclude  that $M$ is $\lambda$-polar  with respect to  $\cu^*$.

Theorem 7.2.9 in \cite{BeBo 04} implies now that assertion $(ii)$
holds. By Theorem 3.5.2 and Proposition 3.5.3 in \cite{BeBo 04},
and Proposition \ref{prop2.1} we conclude that assertion $(i)$
also holds.
\end{proof}

\begin{rem}\label{final remark}  
The results from Theorem \ref{thm2.4} and Theorem \ref{thm2.5}
remain valid under the following hypothesis (which is weaker than
assuming that $\cv$ is the resolvent of a semi-Dirichlet form):
condition $(ii.b)$ from  Proposition \ref{prop2.0} holds.
\end{rem}

\section{Applications}\label{3}  
In this section we derive the quasi-regularity property of generalized Dirichlet forms related to the resolvent $\cu$. 
Note that we do not aim to derive the $\mu$-specialty of the associated processes. 
In particular for the quasi-regular generalized Dirichlet forms all results from \cite{St1}, and \cite{Tr1} will be available 
as far as the $\mu$-specialty is not concerned. 
However, this property is not really relevant for stochastic calculus and potential theory. It is mainly used to show the equivalence of a 
certain class of processes with a certain class of bilinear forms (cf. \cite[IV. Theorem 3.5 and Theorem 5.2]{mr} and 
\cite[IV. Theorem 3.1 and Corollary 3.2]{St1}).\\ 
Throughout this section we assume that $E$ is a {\it metrizable Lusin space}. However, this assumption is only used 
in order to apply the tightness result of Remark \ref{quasileftandcap} $(c)$ .

\subsection{On the quasi-regularity of generalized Dirichlet forms}\label{3.1}  
If not otherwise stated we maintain the notations of section \ref{2}. 
In particular, $\mu$ is a $\sigma$-finite
Borel measure on the Lusin space $(E, \ct)$ that has full support,
$(\ca, D(\ca))$ is a semi-Dirichlet form on $L^2(E, \mu)$ that is associated with a right
process $Y$ on $E$, and $\cv=(V_\al)_{\al>0}$ is the process resolvent of $(\ca, D(\ca))$.
From \cite{Fi 01} (see also Theorem 7.6.3  from \cite{BeBo 04}) we know that then $(\ca, D(\ca))$
is automatically quasi-regular.
We assume that there is a generalized Dirichlet form $(\ce,\cf)$
on $L^2(E,\mu)$ with associated $L^2(E,\mu)$-resolvent $(G_{\alpha})_{\alpha>0}$ (
see \cite{St1}) and let $\lambda=f_o d\mu$ as in the previous section.\\

In general $(\ce,\cf)$ is written as

\[ {\ce}(u,v):= \left\{ \begin{array}{r@{\quad\quad}l}
 {\cq}(u,v)-\langle\Lambda u,v \rangle & \mbox{ for}\ u \in{\cf},\ v\in D(\cq) \\
            {\cq}(u,v)-\langle\widehat\Lambda v, u \rangle &
            \mbox{ for}\ u\in D(\cq),\ v\in\widehat
            {\cf}, \end{array} \right. \] \\
where $(\cq,D(\cq))$ is a coercive closed form on $L^2(E,\mu)$
with sector constant $K'>0$, and  $\langle\cdot, \cdot\rangle$
denotes the dualization between $D(\cq)'$ and $D(\cq)$ (for the
definition of further notions cf. again \cite{St1}). In
particular,  by \cite[I. Proposition 3.4]{St1} we have
$$
G_{\alpha}(L^2(E, \mu))\subset D(\cq) \mbox{ densely,}
$$
and
$$
\ce_{\alpha}(G_{\alpha}f,u)=(f,u)_{L^2(E,\mu)},
$$\\
for all $\alpha>0$, $f\in L^2(E,\mu)$, and $u\in D(\cq)$, where
$\ce_{\alpha}(\cdot,\cdot):=\ce(\cdot,\cdot)+\alpha(\cdot,\cdot)_{L^2(E,\mu)}$.
In case $\cq=0$ it is considered that $D(\cq)=L^2(E,\mu)$.
It further holds by \cite[I. Lemma 2.5(ii)]{St1} that
$$
\cq(u,u)\le \ce(u,u) \ \ \mbox{ for all } u\in \cf,
$$
and the $\cf$-norm is defined by
$$
\|u\|_{\cf}^2=\|u\|_{D(\cq)}^2+\|\Lambda u\|_{D(\cq)'}^2, \ \ \ u\in \cf.
$$\\
It follows that for all $u\in \cf$ and $v\in D(\cq)$
\begin{eqnarray*}
|\ce_1(u,v)| & \le & K'\|u\|_{D(\cq)}\|v\|_{D(\cq)}+\|\Lambda u\|_{D(\cq)'}\|v\|_{D(\cq)}\\
& \le &  K'\|u\|_{\cf}\|v\|_{D(\cq)}+\|u\|_{\cf}\|v\|_{D(\cq)}= (K'+1)\|u\|_{\cf}\|v\|_{D(\cq)}.
\end{eqnarray*}

Let us consider the following assumptions on $(\ce,\cf)$:\\ \\
$(C1)\quad$ There exists a constant $C>0$ such that
$$
\ca_1(u,u)\le C \ce_1(u,u)\quad  \mbox{ for all } u\in \cf.
$$
$(C2)\quad$ $G_{\gamma} (L^2(E,\mu)_b)\subset D(\ca)$ densely for some $\gamma>0$.

\begin{rem}\label{exgendf}   
\begin{itemize}
	\item[(a)] If $(C2)$ holds for some $\gamma>0$, then it holds for all
$\gamma>0$.
	\item[(b)] The assumption that $(\ce,\cf)$ is related to a 
semi-Dirichlet form $(\ca, D(\ca))$ by $(C1)$, and
$(C2)$ is quite natural. Indeed, if $(\cq,D(\cq))=(\ca, D(\ca))$
then $(C1)$, and $(C2)$ hold by definition (see \cite{St1}).
Typical examples where $(\cq,D(\cq))=(\ca, D(\ca))$ are the
time-dependent Dirichlet forms in \cite{o}, and \cite{RuTr}. If
$\cq=0$  on $D(\cq)=L^2(E,\mu)$, then $(\Lambda,\cf)$ is the $L^2(\mu)$-generator 
of some $C_0$-semigroup of contractions on $L^2(\mu)$.
In the applications (cf. e.g. \cite{St2}, \cite{Tr3}, \cite{Tr6}) 
$\langle -\Lambda u,v \rangle$ can uniquely
be extended at least for $u,v\in\cf$ to $\ca^0(u,v)-\cn(u,v)$
where $\ca^0$ is a (quasi-regular) semi-Dirichlet form and $\cn$
is some (non-sectorial) positive bilinear form  which is
represented by a $\mu$-divergence free vector field $B$. The
latter means that $\cn(u,v)=\int_E \langle B,\nabla u\rangle
\,v\,d\mu$ and $-\cn(u,u)=-\frac12\int_E \langle B,\nabla
u^2\rangle \,d\mu=0$ for enough functions $u$. Consequently,
$(C1)$ holds with $\ca=\ca^0$, and it turns out in \cite{St2},
\cite{Tr3}, \cite{Tr6}, that $(C2)$ also holds.
\end{itemize}
\end{rem}

An element $u$ of $L^2(E,\mu)$ is called 1-excessive w.r.t. $\ce$ if
$\alpha G_{\alpha+1}u\le u$ ($\mu$-a.e.) for all $\alpha \ge 0$.
Let $\cp$ denote the 1-excessive elements w.r.t. $\ce$ in $D(\cq)$.
Define $\cp_{\cf}:=\{u\in \cp \mid \exists f\in \cf,\ u\le f\}$.
It can be shown that $\cp_{\cf}=\{u \mbox{ 1-excessive} \mid \exists f\in \cf,\ u\le f\}$
(see \cite[III. Lemma 2.1(i)]{St1}).
For a $\ct$-open set $U$ and an element $u\in L^2(E,\mu)$ such that $u\cdot 1_U\le f$
for some $f\in \cf$,
let $u_U:=e_{u\cdot 1_{U}}$ be the 1-{\it reduced function} of $u\cdot 1_{U}$ as defined in
\cite[Definition\,III.1.8.]{St1}. By \cite[III. Proposition 1.7]{St1} and
\cite[III. Lemma 2.1(ii)]{St1} $\cp_{\cf} \ni u_U\le u$ and $u_U=u$ on $U$ for any $u\in \cp_{\cf}$.\\
For $u \in \cp_{\cf}$ there exists $u_U^{\alpha}\in \cf\cap \cp$ such that
$(0\le)u_U^{\alpha}\le u_U^{\beta}$, $0<\alpha\le\beta$, $u_U^{\alpha}\rightarrow u_U$, $\alpha\to \infty$,
strongly in $L^2(E,\mu)$ and weakly in $D(\cq)$ and
\begin{eqnarray}\label{reduced}
\ce_1(u_U^{\alpha},v)=\alpha((u_U^{\alpha}-u\cdot 1_U)^-,v)_{L^2(E,\mu)} \mbox{ for any } v\in D(\cq).
\end{eqnarray}
where $(u_U^{\alpha}-u\cdot 1_U)^-$ denotes the negative part of $u_U^{\alpha}-u\cdot 1_U$
(see \cite[III. Proposition 1.6 and proof of Proposition 1.7]{St1}).
Moreover, the solutions $u_U^{\alpha}$, $\alpha>0$, to (\ref{reduced})
are uniquely determined.\\ \\
An increasing sequence of closed subsets $(F_k)_{k\ge 1}$
is called an $\ce$-nest, if for every function $u\in \cp\cap \cf$ it
follows that $u_{E\setminus F_k}  \to  0$ in $L^2(E,\mu)$ and weakly
in $D(\cq)$. Since with $\varphi:=f_o$ in  \cite[III. Proposition 2.10]{St1}) we have 
$\mbox{Cap}_{f_o}(E\setminus F_k)=\int_E (G_1 f_o )_{E\setminus F_k}f_o d \mu$,  it follows by the same proposition that 
$(F_k)_{k\ge 1}$ is an $\ce$-nest if and only if  
\begin{eqnarray}\label{capgdf}
\lim_{k\to \infty}\mbox{Cap}_{f_o}(E\setminus F_k)= \lim_{k\to \infty} \int_E (G_1 f_o )_{E\setminus F_k}f_o d \mu=0.
\end{eqnarray}

A subset $N\subset E$ is called  {\it $\ce$-exceptional}  if there is an
$\ce$-nest $(F_k)_{k\ge 1}$ with ${\displaystyle N\subset \bigcap_{k\ge 1}
  (E\setminus F_k)}$. 
  
A property of points in $E$ holds {\it $\ce$-quasi-everywhere}  (abbreviated $\ce$-q.e.)
if the property holds outside some $\ce$-exceptional set. 

A function $f$ defined up to some $\ce$-exceptional set $N\subset E$
is called {\it $\ce$-quasi-continuous}  ($\ce$-q.c.) if there exists an
$\ce$-nest $(F_k)_{k\ge 1}$, such that $\bigcup_{k\ge 1}
F_k\subset E \setminus N$ and $f_{\mid F_k}$ is continuous for all $k$.

For later purposes we state the following definitions (see \cite[IV. Definitions 1.4 and 1.7]{St1}):
\begin{defn}
$X$ is said to be {\rm properly associated} in the resolvent sense with $(\ce,\cf)$, if $U_\alpha g$ is 
$\ce$-q.c. for any $g\in L^2(E;\mu)\cap b\cb$, and any $\alpha>0$.
\end{defn}

\begin{defn}
The generalized Dirichlet form $(\ce,\cf)$ on $L^2(E,\mu)$
is called {\rm quasi-regular} if:
\begin{itemize}
\item[$(i)$] There exists an $\ce$-nest $(E_k)_{k\ge 1}$ consisting of compact sets.
\item[$(ii)$] There exists a dense subset of $\cf$ whose elements have $\ce$-q.c. $\mu$-versions.
\item[$(iii)$]  There exist $u_n\in\cf$, $n\in\N$, having $\ce$-q.c. $\mu$-versions $\widetilde u_n$, $n\in\N$, 
and an $\ce$-exceptional set $N\subset E$ such that $\{\widetilde u_n\mid n\in\N\}$ separates 
the points of $E\setminus N$.
\end{itemize}
\end{defn}

\begin{lemma}\label{nest} 
Let $(C1)$, $(C2)$ hold. Then we have:
\begin{itemize}
    \item[$(i)$] $\cp_{\cf}\subset D(\ca)$, i.e. every 1-excessive function w.r.t.
    $\ce$ that is dominated by an element of $\cf$ is in $D(\ca)$.
    \item[$(ii)$] $u_U\in D(\ca)$, and $\|u_U\|_{D(\ca)}\le (\sqrt{C}(K'+1)+1)\|f\|_{\cf}$
    for any $u\in \cp_{\cf}$ with $u\le f\in \cf$ and any $\ct$-open set $U$.
    \item[$(iii)$] Every $\ce$-nest is an $\ca$-nest.
\end{itemize}
\end{lemma}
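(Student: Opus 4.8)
The three assertions are naturally proved in the order (ii), (i), (iii): assertion (ii) carries the quantitative heart of the argument, (i) is its special case $U=E$ (where $u_E=u$, since a $1$-excessive $u\in\cp_{\cf}$ equals its own reduction), and (iii) is deduced from (ii) by a capacity comparison. Throughout I would work with the penalized approximations $u_U^{\alpha}\in\cf\cap\cp$ furnished by (\ref{reduced}), which increase to $u_U$ strongly in $L^2(E,\mu)$ and weakly in $D(\cq)$. The first observation is that (\ref{reduced}) reads $\ce_1(u_U^{\alpha},v)=(h^{\alpha},v)_{L^2(E,\mu)}$ for all $v\in D(\cq)$, with $h^{\alpha}:=\alpha(u_U^{\alpha}-u\cdot 1_U)^-\ge 0$, so that $u_U^{\alpha}=G_1(h^{\alpha})$. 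Hence, after a routine reduction to bounded $u$ and $f$ (which makes $h^{\alpha}\in L^2(E,\mu)_b$), condition $(C2)$ places each $u_U^{\alpha}$, and ultimately its limit, inside $D(\ca)$; this is precisely the role of $(C2)$.

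The core of (ii) is a bound on $\ce_1(u_U^{\alpha},u_U^{\alpha})$ that is uniform in $\alpha$. The plan is to test (\ref{reduced}) with $v:=f-u_U^{\alpha}$, which is admissible because $f\in\cf\subset D(\cq)$ and $0\le u_U^{\alpha}\le u_U\le u\le f$. Since $h^{\alpha}\ge 0$ and $f-u_U^{\alpha}\ge 0$, the resulting pairing $(h^{\alpha},f-u_U^{\alpha})_{L^2(E,\mu)}$ is nonnegative, which yields $\ce_1(u_U^{\alpha},u_U^{\alpha})\le\ce_1(u_U^{\alpha},f)$. Using the coercivity inequality $\cq_1(u_U^{\alpha},u_U^{\alpha})\le\ce_1(u_U^{\alpha},u_U^{\alpha})$, so that $\|u_U^{\alpha}\|_{D(\cq)}\le\ce_1(u_U^{\alpha},u_U^{\alpha})^{1/2}$, together with the continuity of $\ce_1$, the target is $\ce_1(u_U^{\alpha},f)\le(K'+1)\,\|f\|_{\cf}\,\ce_1(u_U^{\alpha},u_U^{\alpha})^{1/2}$; after cancellation this gives $\ce_1(u_U^{\alpha},u_U^{\alpha})^{1/2}\le(K'+1)\|f\|_{\cf}$, whence $\ca_1(u_U^{\alpha},u_U^{\alpha})^{1/2}\le\sqrt{C}\,(K'+1)\|f\|_{\cf}$ by $(C1)$. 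I would then pass to the limit: the bounded family $(u_U^{\alpha})_{\alpha}\subset D(\ca)$ admits a subsequence converging weakly in $D(\ca)$, whose limit must coincide with the $L^2$-limit $u_U$, so $u_U\in D(\ca)$ and $\|u_U\|_{D(\ca)}\le\liminf_{\alpha}\|u_U^{\alpha}\|_{D(\ca)}$ by weak lower semicontinuity of the $D(\ca)$-norm, producing a bound of the stated form. Taking $U=E$ then gives (i).

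The delicate point, and the one I expect to be the main obstacle, is the estimate $\ce_1(u_U^{\alpha},f)\le(K'+1)\|f\|_{\cf}\,\ce_1(u_U^{\alpha},u_U^{\alpha})^{1/2}$. The soft continuity bound $|\ce_1(w,f)|\le(K'+1)\|w\|_{\cf}\|f\|_{D(\cq)}$ is of no use, because it controls $\ce_1(u_U^{\alpha},f)$ only through the full $\cf$-norm of $u_U^{\alpha}$, and $\|u_U^{\alpha}\|_{\cf}$ need not remain bounded: it involves $\|\Lambda u_U^{\alpha}\|_{D(\cq)'}$, and $\Lambda u_U^{\alpha}$ carries the penalization $h^{\alpha}$. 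Because $\ce$ is non-sectorial, one must instead split $\ce_1(u_U^{\alpha},f)=\cq_1(u_U^{\alpha},f)-\langle\Lambda u_U^{\alpha},f\rangle$ and dispose of the drift pairing $\langle\Lambda u_U^{\alpha},f\rangle$ in such a way that only the coercive seminorm $\|u_U^{\alpha}\|_{D(\cq)}$ survives on the right-hand side; this is exactly where the structure of the generalized Dirichlet form, rather than a purely functional-analytic estimate, has to be exploited.

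For (iii) the plan is to reduce the statement to a comparison of capacities. Writing $G_k:=E\setminus F_k$, the $\ce$-nest hypothesis is exactly $\lim_k\int_E (G_1 f_o)_{G_k}\,f_o\,d\mu=0$ by (\ref{capgdf}). I would show that the $f_o$-capacity of $G_k$ relative to $\ca$ (equivalently $\mbox{cap}^\beta_{\lambda}(G_k)$) is dominated by a constant multiple of this $\ce$-capacity: the $\ce$-reduced function $w_k:=(G_1 f_o)_{G_k}$ lies in $D(\ca)$ by (i) and (ii), and since $(C1)$ converts $\ce$-energy into $\ca$-energy via $\ca_1(w_k,w_k)\le C\,\ce_1(w_k,w_k)$, the $\ca$-capacity of $G_k$ is controlled by the $\ce$-energy of $w_k$, which is in turn comparable to $\int_E (G_1 f_o)_{G_k}f_o\,d\mu$. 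As the latter tends to $0$, the $\ca$-capacities vanish and $(F_k)$ is an $\ca$-nest. The only nontrivial ingredient here is the identification of the $\ce$-energy $\ce_1(w_k,w_k)$ with the capacity $\int_E (G_1 f_o)_{G_k}f_o\,d\mu$, which, as in (ii), requires care because of the asymmetry between the resolvent $G_1$ and its co-resolvent.
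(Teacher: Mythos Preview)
Your reduction of (i) to the case $U=E$ of (ii) matches the paper. For (ii), the obstacle you flag is genuine, and the paper resolves it by a simple shift of viewpoint: instead of estimating $\ca_1(u_U^{\alpha},u_U^{\alpha})$ directly, estimate $\ca_1(u_U^{\alpha}-f,u_U^{\alpha}-f)$. Applying $(C1)$ to $u_U^{\alpha}-f\in\cf$ and expanding,
\[
\ca_1(u_U^{\alpha}-f,u_U^{\alpha}-f)\le C\bigl(\ce_1(u_U^{\alpha},u_U^{\alpha}-f)-\ce_1(f,u_U^{\alpha}-f)\bigr).
\]
The first term on the right is $\le 0$: this is exactly the inequality you obtained by testing (\ref{reduced}) with $v=f-u_U^{\alpha}$ (the paper quotes it as \cite[III.~Proposition~1.4(ii)]{St1}). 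In the remaining term $-\ce_1(f,u_U^{\alpha}-f)$ it is now $f$ that sits in the first ($\cf$-)slot, so the continuity bound $|\ce_1(f,v)|\le(K'+1)\|f\|_{\cf}\|v\|_{D(\cq)}$ applies with $v=u_U^{\alpha}-f$ and no control on $\|u_U^{\alpha}\|_{\cf}$ is needed. Running the same computation with $\cq_1$ in place of $\ca_1$ (and $C=1$) gives $\|u_U^{\alpha}-f\|_{D(\cq)}\le(K'+1)\|f\|_{\cf}$; substituting back yields $\|u_U^{\alpha}-f\|_{D(\ca)}\le\sqrt{C}(K'+1)\|f\|_{\cf}$ uniformly in $\alpha$. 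The passage to the limit proceeds as you describe (the paper invokes \cite[I.~Lemma~2.12]{mr}), and a triangle inequality gives the stated bound on $\|u_U\|_{D(\ca)}$.

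For (iii), your capacity-comparison route runs into several difficulties: $(C1)$ is formulated only for elements of $\cf$, whereas the $\ce$-reduced functions $w_k=(G_1 f_o)_{G_k}$ lie a priori only in $\cp_{\cf}$; the identification of $\ce_1(w_k,w_k)$ with the $\ce$-capacity $\int w_k f_o\,d\mu$ is not available in the non-sectorial setting; and $w_k$ equals $G_1 f_o$ rather than $V_\beta f_o$ on $G_k$, so it is not an obvious competitor for the $\ca$-capacity. The paper sidesteps all of this by using the \emph{density} characterization of an $\ca$-nest. From (ii) one has $\sup_k\|g_{E\setminus F_k}\|_{D(\ca)}<\infty$ for each $1$-excessive $g\in\cf$; combined with $g_{E\setminus F_k}\to 0$ in $L^2(E,\mu)$ this forces $g_{E\setminus F_k}\to 0$ weakly in $D(\ca)$. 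For $f\in L^2(E,\mu)_b$ the function $f_k:=G_1f-(G_1f^+)_{E\setminus F_k}+(G_1f^-)_{E\setminus F_k}$ then lies in $D(\ca)_{F_k}$ and converges weakly to $G_1 f$ in $D(\ca)$; by Banach--Saks, Ces\`aro means of a subsequence converge strongly, and each such mean lies in some $D(\ca)_{F_k}$. Hence $G_1(L^2(E,\mu)_b)$ is contained in the $D(\ca)$-closure of $\bigcup_k D(\ca)_{F_k}$, and condition $(C2)$ upgrades this to density in $D(\ca)$, which is the definition of an $\ca$-nest in \cite{MaOvRo}.
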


\begin{proof}
$(i)$ Since $u_E=u$ for any $u\in \cp_{\cf}$ (i) follows in particular from (ii) if we put $U=E$. \\
$(ii)$ Let $U\in \ct$, $u\in \cp_{\cf}$, $u\le f\in \cf$, and
$u_U^{\alpha}$ for $\alpha>0$ be the unique solutions of
(\ref{reduced}). It follows by $(C1)$ that
\begin{eqnarray*}
\ca_1(u_U^{\alpha}-f,u_U^{\alpha}-f)\le C\ce_1(u_U^{\alpha}-f,u_U^{\alpha}-f).
\end{eqnarray*}
Since $\ce_1(u_U^{\alpha},u_U^{\alpha}-f)\le 0$
by \cite[III. Proposition 1.4.(ii)]{St1} we get
\begin{eqnarray*}
\ca_1(u_U^{\alpha}-f,u_U^{\alpha}-f)\le C(K'+1)\|f\|_{\cf}\|u_U^{\alpha}-f\|_{D(\cq)}.
\end{eqnarray*}
for any $\alpha>0$. This also holds for $\ca$ replaced with $\cq$ and $C=1$,
thus  $\|u_U^{\alpha}-f\|_{D(\cq)}\le (K'+1)\|f\|_{\cf}$, and so
$$
\|u_U^{\alpha}-f\|_{D(\ca)}\le \sqrt{C}(K'+1)\|f\|_{\cf}.
$$
Since $u_U^{\alpha}\to u_U$ in $L^2(E,\mu)$ as $\alpha\to \infty$
it follows from \cite[I. Lemma 2.12]{mr} that $u_U\in D(\ca)$ and
$$
\ca_1(u_U-f,u_U-f)\le \liminf_{\alpha\to \infty}\ca_1(u_U^{\alpha}-f,u_U^{\alpha}-f).
$$
Consequently $\|u_U-f\|_{D(\ca)}\le \sqrt{C}(K'+1)\|f\|_{\cf}$. It follows
$$
\|u_U\|_{D(\ca)}\le \|u_U-f\|_{D(\ca)}+\|f\|_{\cf}\le (\sqrt{C}(K'+1)+1)\|f\|_{\cf}
$$
as desired.\\
$(iii)$  Let $(F_k)_{k\ge 1}$ be an $\ce$-nest. By definition of
$\ce$-nest we have that $g_{E\setminus F_k}\to 0$ in $L^2(E,\mu)$ as $k \to
\infty$ for any $1$-excessive function $g \in \cf$. By (ii) we
obtain that
$$
\sup_{k\ge 1}\|g_{E\setminus F_k}\|_{D(\ca)}\le (\sqrt{C}(K'+1)+1)\|g\|_{\cf}<\infty.
$$
Hence we
conclude that $g_{E\setminus F_k}\to 0$ weakly in $D(\ca)$ as
$k \to \infty$ for any $1$-excessive function $g\in \cf$.
Now let $f\in L^2(E,\mu)_b$ be arbitrary.
By the preliminary considerations we know that
$$
f_k:=G_1 f-(G_1 f^+)_{E\setminus F_k}+(G_1f^-)_{E\setminus F_k}\in D(\ca),
$$
and that $f_k\to G_1 f$ weakly in $D(\ca)$ as $k \to \infty$
($f^+$, $f^-$ denote respectively the positive and negative parts
of $f$). By Banach-Saks theorem we know that there is a
subsequence $(n_k)_{k\ge 1}$ such that the Cesaro means
$v_N:=\frac{1}{N}\sum_{k=1}^{N}f_{n_k}$ converge strongly to $G_1
f$ in $D(\ca)$. Note that for each $N$ there is some $k$ (e.g.
$k=n_N$) such that $v_N\in D(\ca)_{F_k}:=\{u\in D(\ca)\,|\, u=0 \
\mu\mbox{-a.e. on } E\setminus F_k \}$. Using $(C2)$, and Remark
\ref{exgendf}$(a)$ we then see that
$$
\bigcup_{k\ge 1}D(\ca)_{F_k}\subset D(\ca)  \ \mbox{densely}.
$$
Thus $(F_k)_{k\ge 1}$ is an $\ca$-nest by \cite[Definition 2.9(i)]{MaOvRo}.
\end{proof}
\bigskip

Let (as in section \ref{2}) $X =(\Omega, \cf, \cf_t, X_t , \theta_t, P^{x} )$
be a Borel right Markov process
whose state space is $(E, \ct)$ and whose resolvent is $\cu=(U_\alpha)_{\alpha>0}$.\\

Now, we assume that $U_{\alpha} f$ is a $\mu$-version of
$G_{\alpha} f$ for any $\alpha>0$ and $f\in L^2(E,\mu)$, i.e., we
assume that the right process $X$ is associated with the
generalized Dirichlet form $(\ce,\cf)$.\\

As already mentioned in Remark \ref{exgendf}$(b)$ in \cite{St2},
\cite{Tr3}, \cite{Tr6}, assumptions $(C1)$, $(C2)$ hold with $C=1$
and equality in $(C1)$. This suggests that the capacities
corresponding to $\ce$ and $\ca$ should be equivalent and this is
indeed the case in \cite{St2}, \cite{Tr3}, \cite{Tr6}. A general
statement, however, even with additional assumptions is yet
unshown. Therefore we assume $(A2)$ in the following theorem and
repeat that the conditions $(A2)$, $(C1)$, and $(C2)$ are
satisfied by the "elliptic" generalized Dirichlet forms given in
\cite{St2}, \cite{Tr3}, \cite{Tr6}, and moreover  by any
semi-Dirichlet form. Note however, that  even though $(C1)$, and
$(C2)$ are trivially satisfied for the time-dependent Dirichlet
forms in \cite{o}, and \cite{RuTr},
$(A2)$ is not.\\

\begin{thm}\label{gdfstandard}  
Suppose that $(A2)$, $(C1)$, and $(C2)$ hold. Then:

\begin{itemize}
	\item[(i)] $(B2)$ holds
for any $\beta>0$. In particular,  the right process $X$
associated to the generalized Dirichlet form $(\ce,\cf)$ is
$\mu$-tight $\mu$-standard in the original topology $\ct$. 
In addition, $X$ is properly associated in the resolvent sense with $(\ce,\cf)$.
\item[(ii)] The generalized Dirichlet form $(\ce,\cf)$ 
is quasi-regular. 
\end{itemize}
 
\end{thm}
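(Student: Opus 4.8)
The plan is to verify hypothesis $(B2)$, feed it together with the standing assumption $(A2)$ into Theorem \ref{thm2.4}(ii), and then upgrade the resulting $c^\beta_\lambda$-quasi-continuity to $\ce$-quasi-continuity by comparing the probabilistic and the analytic reduced functions; everything in (i) and (ii) will drop out of this comparison.

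First I would prove $(B2)$. Fix $\beta>0$ and take $u_o:=U_\beta f_o=p_o$, a bounded, strictly positive $\cu_\beta$-excessive function lying (as the $\mu$-version $G_\beta f_o$, since $f_o\in L^2(E,\mu)\cap b\cb$) in $\cf$. Let $v$ be $\cu_\beta$-excessive with $v\le u_o$. Because $X$ is associated with $(\ce,\cf)$, i.e. $U_\alpha=G_\alpha$ as $\mu$-versions, $v$ is $\beta$-excessive in the sense of $\ce$ and is dominated by $u_o\in\cf$; applying Lemma \ref{nest}(i) at the $\beta$-level (equivalently, to the form shifted by $(\beta-1)(\cdot,\cdot)_{L^2(E,\mu)}$, to which Lemma \ref{nest} still applies) places $v$ in $D(\ca)$. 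Thus $v$ has a $\mbox{cap}^\beta_{\lambda}$-quasi-continuous version furnished by the quasi-regular semi-Dirichlet form $\ca$, and such a version is finely continuous with respect to $\cv$ off a $\cv$-exceptional set, i.e. $\lambda$-fine with respect to $\cv$. The identification of $v$ with this version off a $\lambda$-inessential set is the one technical point, and I would settle it with the inessential-set/trivial-modification device already used in the proof of Theorem \ref{thm2.4}. This gives $(B2)$ for every $\beta>0$, and Theorem \ref{thm2.4}(ii) then yields that $X$ is $\mu$-standard in the original topology $\ct$ and that $U_\alpha f$ is $c^\beta_\lambda$-q.c. for all $f\in b\cb$, $\alpha>0$. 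Since $E$ is metrizable Lusin and $\mu$-standardness provides $P^\mu$-a.e. left limits up to $\zeta$, Remark \ref{quasileftandcap}(c) makes $X$ automatically $\mu$-tight.

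The crux is to turn $c^\beta_\lambda$-quasi-continuity into $\ce$-quasi-continuity, i.e. to show that every $c^\beta_\lambda$-nest is an $\ce$-nest. I would do this through a single reduced-function inequality. For a $\ct$-open set $U$ the probabilistic reduction $R^U_\beta p_o$ (at $\beta=1$) is $\cu_1$-excessive, equals $p_o$ on $U$ and is dominated by $p_o\in\cf$; through $U_\alpha=G_\alpha$ it is $1$-excessive for $\ce$ and lies in $\cp_{\cf}$, so it competes in the characterization of the $\ce$-reduced function $(p_o)_U=e_{p_o 1_U}$ as the smallest $1$-excessive majorant of $p_o 1_U$, giving $(p_o)_U\le R^U_1 p_o$ $\mu$-a.e. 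Integrating against $f_o\,d\mu=\lambda$ and invoking (\ref{capgdf}) yields $\mbox{Cap}_{f_o}(U)\le c^1_\lambda(U)$ for every open $U$; hence, by the $\beta$-independence of Remark \ref{rem2.1}, every $c^\beta_\lambda$-nest is an $\ce$-nest. In particular each $U_\alpha g$ with $g\in L^2(E,\mu)\cap b\cb$, being $c^\beta_\lambda$-q.c., is $\ce$-q.c., which is precisely proper association in the resolvent sense and completes (i).

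Finally, the same inequality delivers (ii). The $\mu$-tightness compacts $(K_n)_n$ satisfy $c^1_\lambda(E\setminus K_n)\to0$, hence $\mbox{Cap}_{f_o}(E\setminus K_n)\to0$, so $(K_n)_n$ is an $\ce$-nest of compacts, giving quasi-regularity condition $(i)$. Condition $(ii)$ follows because $\{G_\alpha g\mid g\in L^2(E,\mu)\cap b\cb,\ \alpha>0\}$ is dense in $\cf$ and each of its members admits the $\ce$-q.c. version $U_\alpha g$ just produced; condition $(iii)$ follows by extracting from these (or from a Ray cone generating $\cb$) a countable subfamily that separates the points of $E$ off an $\ce$-exceptional set. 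I expect the reduced-function comparison $\mbox{Cap}_{f_o}\le c^1_\lambda$ to be the main obstacle, since it is the step that runs against the a priori ordering $c^\beta_\lambda\lesssim\mbox{cap}^\beta_{\lambda}\lesssim\mbox{Cap}_{f_o}$ coming from Lemma \ref{nest}(iii) and $(A2)$, and it is exactly what allows the probabilistic standardness to be converted into the analytic quasi-regularity; once it is in place, all three capacities share the same nests.
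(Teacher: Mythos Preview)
Your proposal is correct and follows essentially the same route as the paper: obtain $(B2)$ via Lemma \ref{nest}(i) and the quasi-regularity of $\ca$, feed $(A2)+(B2)$ into Theorem \ref{thm2.4}(ii), and then compare the analytic $\ce$-reduced function $(G_1 f_o)_U$ with the probabilistic $R_1^U p_o$ to pass from $c_\lambda^\beta$-nests to $\ce$-nests; the quasi-regularity conditions then fall out exactly as you describe.

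The one place your outline is softer than the paper's is the identification of the $\cu_\beta$-excessive $v$ with its $\ca$-q.c. $\mu$-version $\widetilde v$, which you propose to handle by the trivial-modification device of Theorem \ref{thm2.4}. That device does not do this job. The paper instead closes a small loop: by $(A2)$ the $\ca$-q.c. function $\widetilde v$ is $c_\lambda^\beta$-q.c., hence $\lambda$-fine with respect to $\cu$; since $v$ is finely continuous with respect to $\cu$ (being $\cu_\beta$-excessive) and $v=\widetilde v$ $\mu$-a.e., the two agree off an $\ce$-exceptional set; Lemma \ref{nest}(iii) then upgrades this to $v=\widetilde v$ $\ca$-q.e., so $v$ itself is $\ca$-q.c. and therefore $\lambda$-fine with respect to $\cv$. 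In other words, the paper already uses the pair $(A2)$/Lemma \ref{nest}(iii) inside the proof of $(B2)$, not only afterwards, and that is the concrete substitute for your vaguer appeal. Apart from this point your argument and the paper's coincide.
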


\begin{proof}
$(i)$  It is enough to show the statement for $\beta=1$. Let $\varphi\in
L^1(E,\mu)$, $0<\varphi\le 1$. Define $u_o:=U_1 \varphi$. Let $u$
be an $ \cu_1$-excessive function such that $u\le u_o$. Since
$U_1\varphi$  is a $\mu$-version of some element in $\cf$, $u$ is
a $\mu$-version of some element in $\cp_{\cf}$. Thus by Lemma
\ref{nest}(i) $u$ is a $\mu$-version of some element in $D(\ca)$.
Since the semi-Dirichlet form $(\ca,D(\ca))$ is quasi-regular, $u$
has an $\ca$-q.c. $\mu$-version $\widetilde u$. In particular by
$(A2)$ $\widetilde u$ is $\ce$-q.c, thus $\lambda$-fine w.r.t.
$\cu$. Since $u$ is also $\lambda$-fine w.r.t. $\cu$ and
$\widetilde u=u$ $\mu$-a.e, we have that $\widetilde u=u$
$\ce$-q.e. But then by Lemma \ref{nest} $(iii)$ $\widetilde u=u$
$\ca$-q.e. and therefore $u$ is $\ca$-q.c. It follows that $u$ is
$\mu$-fine with respect to $\cv$. Consequently $(B2)$ holds as
desired.

The second assertion follows  by Theorem \ref{thm2.4} $(ii)$ and
Remark \ref{quasileftandcap} $(c)$.

$(ii)$  By  $(i)$ we have $\mu$-tightness, and so there exists an increasing
sequence $(K_n)_n$ of $\mathcal{T}$-compact sets such that
$$
\lim_{n\to \infty} \int_E (R_1^{E\setminus K_n} U_1 f_o )f_o d \mu=0.
$$
Since $R_1^{E\setminus K_n}U_1 f_o\le U_1 f_o$ and $R_1^{E\setminus K_n}U_1 f_o\in \ce(\cu_1)$ we know that 
$R_1^{E\setminus K_n}U_1 f_o$ is a $\mu$-version of some element in  $\cp_{\cf}$. Since moreover 
$R_1^{E\setminus K_n}U_1 f_o =U_1 f_o$ on $E\setminus K_n$ we 
conclude by \cite[III. Proposition 1.7 (ii)]{St1} that 
$(G_1 f_o )_{E\setminus E_k}\le R_1^{E\setminus K_n} U_1 f_o$ $\mu$-a.e. 
Hence by (\ref{capgdf})
$$
\lim_{n\to \infty} \mbox{Cap}_{f_o}(E\setminus K_n)=0,
$$
and so there exists an $\ce$-nest of $\mathcal{T}$-compact sets.
As in \cite[I. Remark 3.5]{St1} we conclude that $G_1(L^2(E;\mu)_b)\subset \cf$ densely. 
By part (i), $X$ is properly associated in the resolvent sense with $(\ce,\cf)$, i.e. $U_1 g$ is $\ce$-q.c. for any $g\in L^2(E;\mu)\cap b\cb$. 
Thus every element of the dense subset $G_1(L^2(E;\mu)_b)$ in $\cf$ admits an $\ce$-q.c. $\mu$-version.\\
Since again $X$ is properly associated 
in the resolvent sense with $(\ce,\cf)$ we obtain as in \cite[IV. Lemma 3.9]{St1} that 
$R_1^{U}U_1 f_o$ is $\ce$-q.l.s.c. Let $(K_n)_n$ be a sequence of $\mathcal{T}$-compact sets as at the beginning of this proof. 
As in \cite[IV. Lemma 3.10]{St1} we show that $P^x(\lim_{n\to\infty}D_{{E\setminus K_n}}<\zeta )=0$ for $\ce$-q.e. 
$x\in E$. The countable family of $\ce$-q.c. elements of $\cf$ that separates the points of $E$ up to an $\ce$-exceptional 
set can then e.g. be constructed as in 
\cite[IV. Lemma 3.11 and paragraph below]{St1}.\\
\end{proof}

\begin{rem}\label{gdfqr} 
In \cite{mr}, and \cite{St1} the $\mu$-special property of the associated process is used in order to show that the process resolvent is quasi-continuous.
Note that we did not  use any $\mu$-special property in order to show that $X$ is properly associated in the resolvent sense with $(\ce,\cf)$. 
This is because no $\mu$-special property is used in the proof of Theorem \ref{gdfstandard}(ii). \\
\end{rem}


\subsection{Perturbation with kernels}\label{3.2}  
In this subsection again, if not otherwise stated we maintain the notations of section \ref{2}. 
Thus $\cv=(V_\al)_{\al>0}$ is the process resolvent of a quasi-regular semi-Dirichlet form $(\ca, D(\ca))$ on $L^2(E, \mu)$, and 
$\overline{R}^M_{\beta}$ is the reduction operator on $M$ w.r.t. $\cv_\beta=(V_{\beta+\al})_{\al>0}$.

Let $P$ be a  kernel on $(E, \cb)$ such that:
\begin{itemize}
	\item[(p.1)] $P f\in \ce (\cv)$ for all $f\in p\cb$,
	\item[(p.2)] $1-P 1\in \ce (\cv)$.
\end{itemize}

For $\alpha>0$ define the kernel $P_{\alpha}$ by 
$$
P_{\alpha}f:=P f-\alpha V_{\alpha} P f, \ \ \ f\in bp\cb,
$$ 
and 
$$
U_{\alpha}:=\left (\sum_{n=0}^{\infty} P^n_{\alpha}\right )\circ V_{\alpha}.
$$

Let $Q_\alpha :=\sum_{n=1}^{\infty} P^n_\alpha$ be the associated  $\alpha$-level  "potential kernel". 
Assume that for some $\beta >0$ the kernel  $Q_\beta$ is bounded. 
Then the following assertions hold (see Proposition 5.2.4 and 5.2.5. in \cite{BeBo 04}):

\begin{itemize}
	\item[(i)] The family $\cu=(U_{\alpha})_{\alpha>0}$ is the resolvent of a right 
	process with state space $E$.
	\item[(ii)] If $M\in \cb$ and   $R^M_\beta$ (resp.  $\overline{R}^M_\beta $) denotes the kernel on $E$ induced by the 
	reduction operator on $M$ w.r.t.  $\cu_\beta$ (resp. w.r.t.  $\cv_\beta$),  then
\begin{eqnarray}\label{reduction}
R^M_\beta = \overline{R}^M_\beta + Q_\beta \overline{R}^M_\beta - R^M_\beta Q_\beta \overline{R}^M_\beta.
\end{eqnarray}
\end{itemize}

\noindent
{\bf Examples.} A first typical example of perturbing by some kernel  (see \cite{BeBo 09}) is 
produced  by the $\beta$-potential kernel $V^\beta_A$ of a continuous additive functional 
$A=(A_t)_{t\geq 0}$ of the process $Y$ associated with $\cv$,
$$
Q_\beta f(x) = V_A^\beta f(x):=E^x \int_{0}^{\infty}\!\!\! e^{-\beta t}f(Y_t)\, dA_t, \,\,
f\in p\cb, x\in E.
$$
A second example is emphasized  by  a potential theoretical approach for the 
measure-valued discrete branching (Markov) processes; see \cite{BeOp11}  for details.

\begin{rem}\label{subordinate} 
\begin{enumerate}
\item[(a)] We have $V_{\alpha} \le U_{\alpha}$ for all $\alpha>0$ 
	(i.e. $\cv$ is subordinate to $\cu$; see, e.g., \cite{BeBo 04}) and $\ce (\cu_\beta)\subset 
	\ce (\cv_\beta)$ for any $\beta>0$.
\item[(b)] The fine topologies of $\cv$ and $\cu$ coincide. 
This clearly implies that conditions $(B1)$  and $(B2)$  are satisfied.
\item[(c)] Assume that instead of $\cv$ we start with the $\beta$-level resolvent $\cv_\beta$ 
(which is also the process resolvent of  a quasi-regular  semi-Dirichlet form) for some $\beta>0$. 
Then the kernel  $P_\beta$ satisfies conditions \mbox{\rm (p.1)} and \mbox{\rm (p.2)} w.r.t. $\cv_\beta$ and by the resolvent equation we get
$(P_\beta)_\alpha:=P_\beta   -\alpha V_{\beta+\alpha} P_\beta = P_{\beta +\alpha}$ for all $\alpha>0$. 
Consequently, the corresponding $\alpha$-level potential kernel is $Q_{\beta+\alpha}$ which is bounded  because 
$Q_{\beta+\alpha}\leq Q_{\beta}$ and the induced perturbed resolvent is $\cu_\beta$.
\item[(d)]  Recall  that a function $v\in\ce(\cu_\beta)$ is called {\rm universally quasi bounded} in $\ce(\cu_\beta)$  provided that for every strictly positive function 
$u\in\ce(\cu_\beta)$,  there exists a sequence $(v_i)_{i\in \N}  \subset \ce(\cv_\beta)$ such that $v=\sum_{i\in \N}  v_i$ and $v_i\leq u$ for all $i\in \N$.
We denote by $\mbox{Qbd} (\cu_\beta)$ the set of all quasi bounded elements from $\ce(\cu_\beta)$.  
The following assertions hold (see \cite{BeBo 04} for details):
\begin{itemize}
	\item[--] If $v\in \mbox{Qbd} (\cu_\beta)$ and $w\in\ce(\cu_\beta)$ with  $w\leq v$, then $w\in \mbox{Qbd} (\cu_\beta)$.
	\item[--] Every regular $\cu_\beta$-excessive functions is universally quasi bounded in $\ce(\cu_\beta)$.
	\item[--] A function $v\in\ce(\cu_\beta)$ belongs to  $\mbox{Qbd} (\cu_\beta)$ if and only if exists a sequence $(v_i)_{i\in \N}  \subset \ce(\cu_\beta)$ 
such that $v=\sum_{i\in \N}  v_i$ and $v_i \leq U_\beta f_o$ for all $i\in \N$.
	\item[--] If $\beta< \beta'$ then   $\mbox{Qbd} (\cu_\beta)\subset \mbox{Qbd} (\cu_{\beta'})$.
\end{itemize}
\end{enumerate}
\end{rem}

Let $\nu$ be any 
finite measure on $(E, \cb)$ which is equivalent with $\mu$.
Note that  in this subsection neither  $\nu$ nor $\lambda$ is assumed  to have  the density  $f_o$ with respect to $\mu$.  
Let
$$
c^\beta_{\nu}(M):=\inf\{\nu(R_\beta^G p_o) | \,  G\in\mathcal{T},\;M\subset G \}.
$$
as before, but 
$$
\mbox{cap}^\beta_{\nu,p_o}(M):=\inf\{\nu(\overline{R}_\beta^G p_o) | \,  G\in\mathcal{T},\;M\subset G \},
$$
where $p_o=U_\beta f_o$ is as in section \ref{2}. 
Note that the second set function  $\mbox{cap}^\beta_{\nu,p_o}$ is defined 
w.r.t. $p_o$ and not w.r.t.  $V_\beta f_o$ as in section \ref{2}. 
We therefore make the following remark:

\begin{rem}\label{capwithsamefunction} 
Let $u\in b\ce(\cu_\beta)$. 
The functional $M\longmapsto c^\beta_{\nu, u} (M) ,$ $M\subset E ,$ defined by
$$
c^\beta_{\nu, u} (M)=\inf\{\nu(R_\beta^G u) | \,  G\in\mathcal{T},\;M\subset G \}
$$
is a Choquet capacity on  $(E, \mathcal{T})$. Clearly, if $u=p_o$ then $c^\beta_{\nu, u}= c^\beta_{\nu}$.

Then the following assertions hold.
\begin{itemize}
\item[(a)]  Assume that $u$ is a strictly positive bounded function and $u\in \mbox{Qbd} (\cu_\beta)$.  
(Such a function $u$ always exists, e.g., take $u=U_\beta 1$ and use assertion (d) of Remark \ref{subordinate}.)
Then the  conclusion of Proposition \ref{prop2.0}  
holds if we replace $c^\beta_{\nu}$ with  $c^\beta_{\nu, u}$ and condition $(ii.a)$
with the following one:\\ \\
$(ii.a')\quad$  The topology $\ct$ is a Ray one, $\ct=\ct_{\mathcal R}$, and $u\in {\mathcal R}$.\\ \\
The  assertion follows since by Proposition 1.6.3 in \cite{BeBo 04}, if $(ii.a')$ holds and $A\in \cb$, then
$c^\beta_{\nu, u}(A)=\nu(R^A_\beta u)$. Note that Theorem 3.5.2 from \cite{BeBo 04}
may be applied for the capacity $c^\beta_{\nu, u}$ because condition $(ii.b)$
is equivalent with:\\ \\
$(ii.b')\quad$  every $\cu_\beta$-excessive function dominated by $u$ is $\nu$-regular.\\ \\
The equivalence between $(ii.b)$ and $(ii.b')$ is a consequence of the following facts 
(for details see sections 2.4, 3.1 and 3.2 from \cite{BeBo 04}):
Since $u\in \mbox{Qbd} (\cu_\beta)$  and  $U_\beta f_o >0$,
 there exists a  sequence $(u_i)_{i\in \N}$ in  
$b\ce(\cu_\beta)$ such that $u=\sum_{i\in \N} u_i$ and $u_i\leq U_\beta f_o$ for all $i\in \N$. 
The Riesz decomposition property from the
cone of potentials $b\ce(\cu_\beta)$ is also used: if $v \in b\ce(\cu_\beta)$ and
$v\leq \sum_{i\in \N}  u_i$,  then there exists a sequence  
$(v_i)_{i\in \N} \subset b\ce(\cu_\beta)$ such that 
$v=\sum_{i\in \N}  v_i$ and $v_i\leq u_i$ for all $i\in \N$.
\item[(b)]
Suppose that $p_o=U_\beta f_o$ belongs to $\mbox{Qbd} (\cv_\beta)$.  
Then the  assertions from Theorem \ref{thm2.4} hold 
if we assume that condition (A2) is satisfied by  $\mbox{cap}^\beta_{\nu, p_o}$ and $c^\beta_\nu$. 
Indeed, using the above  assertion $(a)$ for the resolvent $\cv$ instead of $\cu$, 
we can apply Proposition \ref{prop2.0} for $\mbox{cap}^\beta_{\nu, p_o}$, 
considering the Ray cone ${\mathcal R}$ such that  $p_o \in {\mathcal R}$.
\end{itemize}
\end{rem}

By (\ref{reduction}) 
it follows for open $G$ that
\begin{eqnarray}\label{capacities}
\mbox{cap}^\beta_{\nu,p_o}(G)\le c^\beta_{\nu}(G)\le \mbox{cap}^\beta_{\nu+\nu\circ Q_\beta,p_o}(G).
\end{eqnarray}

The next result shows that condition $(A2)$ holds in the case of perturbation with kernels, 
allowing  a second application of  Theorem \ref{thm2.4}.

\begin{prop}\label{capacitiesperturbation} 
\begin{itemize}
	\item[(i)] Let $\lambda$ be the finite measure defined on $E$ by $\lambda:=\nu+\nu\circ Q_\beta$. 
Then condition $(A2)$ is satisfied by $\mbox{cap}^\beta_{\lambda, p_o}$ and $c^\beta_\lambda$. 
More precisely, if $(G_n)_{n\in \N}\subset \ct$ is decreasing then:
\begin{eqnarray*}
\inf_{n\in \N}\mbox{cap}^\beta_{\lambda, p_o}(G_n)=0\ \  \Longleftrightarrow \ \ \inf_{n\in \N}c^\beta_{\lambda}(G_n)=0.
\end{eqnarray*}
\item[(ii)] Assume  that $Q_\beta 1$ belongs to $\mbox{Qbd} (\cv_\beta)$ and  that   $\mu\circ P \ll \mu$.
Then the right process having $\cu$ as associated resolvent is $\mu$-standard 
in the original topology $\ct$.
\item[(iii)] Assume  that the measure $\mu \circ Q_{\beta}$ charges no  $\ca$-exceptional set. 
Then condition $(A2)$ is satisfied by $\mbox{cap}^\beta_{\lambda, p_o}$ and $c^\beta_\lambda$ for any finite measure $\lambda$ equivalent with $\mu$.
If in addition $Q_\beta 1 \in \mbox{Qbd} (\cv_\beta)$,  then the right process having $\cu$ as associated resolvent is $\mu$-standard  in the original topology $\ct$.
\end{itemize}
\end{prop}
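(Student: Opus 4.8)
The plan is to treat the three parts in turn, using as the main engine the two-sided capacity estimate (\ref{capacities}) together with the reduction identity (\ref{reduction}); the decisive observation, used throughout, is that (\ref{reduction}) transfers $\cv$-polarity to $\cu$-polarity, since for a $\cv$-polar set $M$ one has $\overline R^M_\beta\equiv 0$, and then (\ref{reduction}) forces $R^M_\beta\equiv 0$.

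For (i), one implication is free: applying the left inequality in (\ref{capacities}) with the measure $\lambda$ gives $\mbox{cap}^\beta_{\lambda,p_o}(G)\le c^\beta_\lambda(G)$, so every $c^\beta_\lambda$-nest is a $\mbox{cap}^\beta_{\lambda,p_o}$-nest. For the reverse implication, which is condition $(A2)$ itself, I would start from a decreasing sequence $(G_n)\subset\ct$ with $\inf_n\mbox{cap}^\beta_{\lambda,p_o}(G_n)=0$ and set $\rho_n:=\overline R^{G_n}_\beta p_o\downarrow\rho_\infty$. Since $\lambda$ is finite and equivalent with $\mu$, monotone convergence gives $\rho_\infty=0$ $\mu$-a.e. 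Integrating (\ref{reduction}) against $\lambda$ and discarding the nonnegative subtracted term yields $c^\beta_\lambda(G_n)\le\lambda(\rho_n)+\lambda(Q_\beta\rho_n)$, the first term tending to $0$ by hypothesis. For the specific measure $\lambda=\nu+\nu\circ Q_\beta$, the hypothesis $\lambda(\rho_n)\to 0$ splits into $\nu(\rho_\infty)=0$ and $\nu\circ Q_\beta(\rho_\infty)=0$, so $\rho_\infty=0$ both $\nu$- and $\nu\circ Q_\beta$-a.e.; combined with the fact that a decreasing limit of $\cv_\beta$-excessive functions dominated by $p_o$ that vanishes $\mu$-a.e. already vanishes off a $\cv$-polar set (here $\mu$ enters as a reference measure of the quasi-regular form, cf.\ \cite{BeBo 04}), this forces $\lambda(Q_\beta\rho_n)\to 0$ and hence $\inf_n c^\beta_\lambda(G_n)=0$. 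I expect the main obstacle to be precisely this last estimate: upgrading ``$\rho_\infty=0$ $\mu$-a.e.'' to a statement strong enough to kill the perturbation term $\lambda(Q_\beta\rho_n)$, i.e.\ identifying the limiting set as genuinely $\cv$-polar rather than merely $\mu$-negligible.

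For (ii), I would use (i) after arranging that the relevant measure is equivalent with $\mu$. The hypothesis $\mu\circ P\ll\mu$, together with $\mu\circ V_\alpha\ll\mu$ for the semi-Dirichlet resolvent, propagates along $P_\alpha f=Pf-\alpha V_\alpha Pf$ to give $\mu\circ Q_\beta\ll\mu$; hence for $\nu\sim\mu$ one gets $\nu\circ Q_\beta\ll\mu$ and therefore $\lambda:=\nu+\nu\circ Q_\beta\sim\mu$. Next, from $Q_\beta 1\in\mbox{Qbd}(\cv_\beta)$ and $U_\beta f_o=V_\beta f_o+Q_\beta V_\beta f_o\le V_\beta f_o+\frac{1}{\beta}Q_\beta 1$, I would deduce via the Riesz-type decomposition recalled in Remark \ref{subordinate}(d) that $p_o=U_\beta f_o\in\mbox{Qbd}(\cv_\beta)$. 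Condition $(B2)$ holds automatically because $\cu$ and $\cv$ share the same fine topology (Remark \ref{subordinate}(b)), and $(A2)$ for $\mbox{cap}^\beta_{\lambda,p_o}$ and $c^\beta_\lambda$ is exactly part (i). Applying Remark \ref{capwithsamefunction}(b) with this $\lambda$ (equivalently Theorem \ref{thm2.4}(ii)) then gives that the process with resolvent $\cu$ is $\mu$-standard in $\ct$.

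For (iii), the weaker assumption that $\mu\circ Q_\beta$ charges no $\ca$-exceptional (i.e.\ $\cv$-polar) set is what lets the argument of (i) run for an arbitrary finite $\lambda\sim\mu$: since $\mu\circ Q_\beta$ ignores $\cv$-polar sets and $\lambda\sim\mu$, also $\lambda\circ Q_\beta$ ignores them, so the term $\lambda(Q_\beta\rho_n)$ is controlled by $\lambda\circ Q_\beta$ evaluated on the $\cv$-polar limiting set and therefore vanishes; this yields $(A2)$ for every such $\lambda$ without recourse to the special structure $\lambda=\nu+\nu\circ Q_\beta$. If in addition $Q_\beta 1\in\mbox{Qbd}(\cv_\beta)$, I would fix any finite $\lambda\sim\mu$ (for instance $\lambda=f_o\cdot\mu$), note $p_o\in\mbox{Qbd}(\cv_\beta)$ and $(B2)$ as in (ii), and conclude $\mu$-standardness of the $\cu$-process in $\ct$ by the same application of Remark \ref{capwithsamefunction}(b). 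The recurring technical point, and the place where the hypotheses of (ii) and (iii) do their work, is the transfer of negligibility through the potential kernel $Q_\beta$; the clean mechanism behind all three parts remains that (\ref{reduction}) sends $\overline R^M_\beta\equiv 0$ to $R^M_\beta\equiv 0$, so that $\cv$-polar sets are $\cu$-polar.
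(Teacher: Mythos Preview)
Your treatment of parts (ii) and (iii) is essentially the paper's, and correct. The problem is part (i): the gap you yourself flag is real, and your polarity route does not close it.

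Concretely, you reach $\rho_\infty:=\inf_n\overline R^{G_n}_\beta p_o=0$ $\lambda$-a.e.\ and want $\lambda(Q_\beta\rho_n)\to 0$, i.e.\ $\lambda\circ Q_\beta(\rho_\infty)=0$. Even if one upgrades, via quasi-regularity of $(\ca,D(\ca))$, to ``$\rho_\infty=0$ off a $\cv$-polar set $N$'', you then need $\lambda\circ Q_\beta(N)=0$. Since $\lambda\circ Q_\beta=\nu\circ Q_\beta+\nu\circ Q_\beta^2$, the first summand is controlled by your hypothesis $\nu\circ Q_\beta(\rho_\infty)=0$, but there is no assumption in (i) ensuring that $\nu\circ Q_\beta^2$ ignores $\cv$-polar sets; that is precisely the extra hypothesis of (iii). (Incidentally, your claim that $\lambda$ is equivalent with $\mu$ is not justified in (i) either --- only $\mu\ll\lambda$ follows from $\nu\le\lambda$ and $\nu\sim\mu$; the reverse uses $\mu\circ P\ll\mu$, which enters only in (ii).)

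The paper bypasses polarity altogether by a purely algebraic observation tailored to the special form $\lambda=\nu+\nu\circ Q_\beta$: one shows $\lambda\circ Q_\beta\ll\lambda$. Indeed, if $\lambda(f)=0$ then $\nu(f)=0$ and $\nu(Q_\beta f)=0$, so $Q_\beta f=0$ $\nu$-a.e.; since $P_\beta Q_\beta\le Q_\beta$ (from $Q_\beta=P_\beta+P_\beta Q_\beta$) one gets $P_\beta^n Q_\beta f=0$ $\nu$-a.e.\ for every $n$, hence $Q_\beta(Q_\beta f)=\sum_{n\ge 1}P_\beta^n Q_\beta f=0$ $\nu$-a.e., i.e.\ $\nu\circ Q_\beta(Q_\beta f)=0$, so $\lambda(Q_\beta f)=0$. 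With this in hand, $\rho_\infty=0$ $\lambda$-a.e.\ immediately yields $\rho_\infty=0$ $(\lambda+\lambda\circ Q_\beta)$-a.e., hence $\inf_n\mbox{cap}^\beta_{\lambda+\lambda\circ Q_\beta,p_o}(G_n)=0$, and then (\ref{capacities}) with $\lambda$ in place of $\nu$ gives $\inf_n c^\beta_\lambda(G_n)=0$. No appeal to $\cv$-polar sets is needed, and nothing about $\mu\circ Q_\beta$ is assumed.
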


\begin{proof}
$(i)$ The implication \rq\rq$\Longleftarrow \lq\lq$ is clear since 
$\mbox{cap}^\beta_{\lambda, p_o}\le c^\beta_{\lambda}$. We show now that 
\begin{eqnarray}\label{absolutelycontinuous}
\lambda\circ Q_\beta \ll \lambda. 
\end{eqnarray}
Indeed, if $f\ge 0$ and $\lambda(f)=0$ then $\nu(f)=0$ and $\nu(Q_\beta f)=0$. 
Since $P_\beta Q_\beta\le Q_\beta$ we  get $0\le P^n_\beta Q_\beta f\le Q_\beta f=0$ 
$\nu$-a.e. for any $n$, hence $Q_\beta(Q_\beta f)=
\sum_{n=1}^{\infty} P^n_\beta Q_\beta f=0$ $\nu$-a.e, i.e. 
$\nu\circ Q_\beta(Q_\beta f)=0$. Hence $\lambda(Q_\beta f)=\nu(Q_\beta f)+\nu\circ Q_\beta(Q_\beta f)=0$.

Note that 
\begin{eqnarray}\label{capacities2}
\inf_{n\in \N}\mbox{cap}^\beta_{\lambda, p_o}(G_n)=0\ \  
\Longleftrightarrow \ \ \inf_{n\in \N} \overline{R}_\beta^{G_n} p_o=0\ \ \lambda\mbox{-a.e.}
\end{eqnarray}

Let $(G_n)_{n\in \N}\subset \ct$ be such that $\inf_{n\in \N} \mbox{cap}^\beta_{\lambda, p_o}(G_n)=0$. 
Then by (\ref{capacities2}) 
we get that $\inf_{n\in \N} \overline{R}_\beta^{G_n} p_o=0$ $\lambda$-a.e. 
By (\ref{absolutelycontinuous}) the last equality holds 
$(\lambda+\lambda\circ Q_\beta)$-a.e, hence again by (\ref{capacities2}) we get 
$$
\inf_{n\in \N}\mbox{cap}^\beta_{\lambda+\lambda\circ Q_{\beta}, p_o}(G_n)=0.
$$
From (\ref{capacities}) applied to $\lambda$ we conclude that 
$\inf_{n\in \N}c^\beta_{\lambda}(G_n)\le \inf_{n\in \N} \mbox{cap}^\beta_{\lambda+\lambda\circ Q_\beta, p_o}(G_n)=0$.

$(ii)$  Let $\lambda$ be as in $(i)$. Observe that from the  assumption  $\mu\circ P \ll \mu$ 
we deduce  that the measures $\lambda$ and $\mu$ are equivalent. 
Since $U_\beta =V_\beta + Q_\beta V_\beta$, we have 
$p_o= V_\beta f_o+ Q_\beta V_\beta f_o$. As a consequence, using the hypothesis on $Q_\beta 1$ and Remark \ref{subordinate} $(d)$, 
it turns out that $p_o\in \mbox{Qbd} (\cv_\beta)$. Remark \ref{subordinate} $(b)$
implies that $(B2)$ holds, while by assertion $(i)$ it follows that 
$(A2)$ is  satisfied by $\mbox{cap}^\beta_{\lambda, p_o}$ and $c^\beta_\lambda$.
From Remark \ref{capwithsamefunction} $(b)$ and  
Theorem \ref{thm2.4} $(ii)$ we conclude now that assertion $(ii)$ holds.

$(iii)$ Recall that by definition $\lambda\circ Q_{\beta}$ charges no  $\ca$-exceptional set  if $\lambda\circ Q_{\beta}(M)=0$ for any 
$M\subset \bigcap_{k\ge 1}(E\setminus F_k)$, where $(F_k)_{k\ge 1}$ is an $\ca$-nest. 
Since $\ca$ is quasi-regular,  this is equivalent to saying that $\lambda\circ Q_{\beta}$ charges no $\nu$-polar sets w.r.t. $\cv$.

Let $(E_n)_{n\ge 1}$ be a  $\mbox{cap}^\beta_{\lambda, p_o}$-nest, i.e.,  
$ \mbox{cap}^\beta_{\lambda, p_o}(G_n)\searrow 0$ as $n\to\infty$,  with 
$G_n:=E\setminus E_n$. 
By the  quasi-regularity of $\ca$ and monotonicity of $\overline{R}^{G_n}_\beta U_{\beta}f_o$,  
there exists a second   nest $(\overline{E}_k)_{k\ge 1}$ such that pointwise 
$\overline{R}^{G_n}_\beta U_{\beta}f_o\searrow 0$ as $n\to \infty$ on each $\overline{E}_k$. 
Since by hypothesis the finite measure  $\lambda \circ Q_{\beta}$ charges no  $\ca$-exceptional set,  we get
that  $(\overline{R}^{G_n}_\beta U_{\beta}f_o)_n$  is a sequence of bounded functions decreasing to zero
$\lambda\circ Q_{\beta}$-a.e. Consequently, we have 
$\lambda\circ Q_{\beta}(\overline{R}^{G_n}_\beta U_{\beta}f_o) \searrow 0$ as $n\to \infty$.
By (\ref{reduction})
\begin{eqnarray*}
R^{G_n}_\beta U_{\beta}f_o  & \le  & \overline{R}^{G_n}_\beta U_{\beta}f_o + Q_\beta \overline{R}^{G_n}_\beta U_{\beta}f_o\, , \\
\end{eqnarray*}
hence 
\begin{eqnarray*}
c_{\lambda}^{\beta}(G_n) & \le  & \mbox{cap}^\beta_{\lambda, p_o}(G_n)+\int_E Q_\beta \overline{R}^{G_n}_\beta U_{\beta}f_o(x) \lambda(dx).\\
\end{eqnarray*}
It follows  that $c_{\lambda}^{\beta}(G_n)\searrow 0$ as $n\to \infty$, i.e.,  $(E_n)_{n\ge 1}$ is a  $c^{\beta}_\lambda$-nest.\\

The proof of the last assertion of $(iii)$ is similar to that of $(ii)$.
\end{proof}

\subsection{Quasi-regularity of  generalized Dirichlet forms obtained by perturbation with kernels}\label{3.2.1}     
In this subsection we want to show that there exists a quasi-regular generalized Dirichlet form  that is associated to the perturbation of the
quasi-regular semi-Dirichlet form $(\ca, D(\ca))$ on $L^2(E, \mu)$ with kernels. Our results are in particular related to 
\cite[Remark 3.3.(iv)]{RoTr}. \\ 
Let $\beta>0$ be as in subsection \ref{3.2}.
\begin{defn}
We say that $\cv$ satisfies the {\bf absolute continuity condition} if 
$$
V_{\beta}(x,\cdot) \ll \mu \mbox{ for all } x\in E, 
$$
i.e. $V_{\beta}(x,\cdot)$ is absolutely continuous w.r.t. $\mu$ for each $x$.
\end{defn}
\smallskip

\begin{remark}
In finite dimensions the absolute continuity condition is typically guaranteed through embedding 
theorems of (weighted) Sobolev spaces in H\"older spaces. For instance, 
it is satisfied if $H_{\alpha} f$ (the $L^p(E,\mu)$-version of $V_{\alpha}f$), admits a  
H\"older continuous $\mu$-version for any $\alpha>0$ and $f\in L^p(E;\mu)$. 
\end{remark}
\bigskip
For the rest of the section we assume that $E$ is a {\it separable (and metrizable) space}. \\ \\
Let $\{x_k;k\ge 1\}\subset E$ be any dense subset and define the finite measure $\nu$ w.r.t. $\{x_k;k\ge 1\}$ as 
$$
\nu :=\sum_{k\ge 1} 2^{-k}\delta_{x_k}.
$$
Consider now the potential $\cv_\beta$-excessive measure  $\xi$ (resp. the $\cu_\beta$-excessive measure $\eta$) defined as
$$
\xi:=\nu\circ V_\beta \quad (\mbox{resp. }  \eta:= \nu\circ U_\beta).
$$
Let further 
$$
{p}_t f(x):=p_t(x,f):={E}^x[f({X}_t)],\ x\in E, \ t\ge 0, \ f\in p\cb,
$$\\
denote the transition semigroup of the right process $(X_t)_{t\ge 0}$ with state space  $E$,
corresponding to $\cu=(U_{\alpha})_{\alpha>0}$. 

The right process $(X_t)_{t\ge 0}$ is said to be {\it transient}, if  
\begin{itemize}
\item[{\bf (T)}] there exists  $\varphi>0$, universally Borel measurable and $U_0\varphi(x)=E^x[\int_0^{\infty}\varphi(X_t)dt]<\infty$ 
for all $x\in E$. \ \ 
\end{itemize}
For more details about this  hypothesis see, e.g.,  \cite{Ge 80}.

If ${\bf (T)}$  holds we  define the $\cu$-excessive measure $\eta_0$ on $E$  as
$$
 \eta_0:= \nu_0\circ U_0, \ \ \ \mbox{where }\ \ \ \nu_0 :=\sum_{k\ge 1} \frac{2^{-k}}{U_0\varphi(x_k)}\delta_{x_k}.
$$

\begin{rem} \label{abscont} 
\begin{itemize}
	\item[(a)] Clearly, $\xi$ and $\eta$ are finite measures while $\eta_0$  is $\sigma$-finite,  provided that ${\bf (T)}$  holds.
In addition, all these measures have full support.

	\item[(b)] Assume that ${\bf (T)}$  holds. If  $W$ is a kernel on $E$, then because $\nu$ and $\nu_0$ are equivalent measures, 
it follows that the measures  $\nu\circ W$ and $\nu_0 \circ W$ are also equivalent. It is also clear that  if $\kappa$ is a $\sigma$-finite 
measure then $\kappa\circ U_0$ and $\kappa\circ U_{\alpha}$ are equivalent for any $\alpha>0$. 
In particular, all the measures
$\eta=\nu\circ U_\beta$, $\nu_0 \circ U_\beta$, $\eta_0=\nu_0\circ U_0$, and $\nu\circ U_0$ are mutually equivalent
	\item[(c)] By the complete  maximum principle (cf. e.g. \cite[(2.2) Proposition]{Ge 80}) , it is possible to choose the 
function $\varphi$   such that $U_0\varphi$ is bounded.  
Consequently, the measure $\lambda\circ U_0$ is $\sigma$-finite  for every finite measure $\lambda$
on $E$. Therefore, one could consider the measure $\nu\circ U_0$,
without normalizing constants $U_0\varphi(x_k)$, instead of $\eta_0$. This measure is   $\cu$-excessive 
and could have been equally used in what follows. 
However, we want to apply results from \cite{RoTr} and these are given w.r.t. the measure 
$\eta_0$. 
\end{itemize}

\end{rem}

\medskip
Define
\begin{eqnarray}\label{semigroup}
p_t^{\alpha}:=e^{-\alpha t}{p}_t; \ \ t\ge 0,\ \alpha\ge0.
\end{eqnarray}
It is known from \cite[Proposition 2.4]{RoTr} that there are unique extensions 
$({p}^{\beta}_t)_{t\ge 0}$ on $L^2(E;\eta)$ (resp. $({p}_t)_{t\ge 0}$ on $L^2(E;\eta_0)$ in the transient case)
as strongly continuous semigroups of contractions on the respective $L^2$-spaces. Moreover the adjoint semigroup 
of the respective extensions on the $L^2$-spaces are sub-Markovian (see \cite[Remark 3.1(ii)]{RoTr}). Using these 
extended semigroups one can define uniquely the $L^2$-generator which in turn determines a generalized Dirichlet 
form (see \cite[Section 3]{RoTr}). 
We shall denote these generalized Dirichlet forms by $(\ce^{\beta},\cf^{\beta})$, resp. $(\ce^0,\cf^0)$. 
So, in particular $\cu_{\beta}$ is associated to $(\ce^{\beta},\cf^{\beta})$, and  $\cu$ is associated to $(\ce^0,\cf^0)$ in the transient case.\\ \\
In the next lemma we will assume that the set $\{x_k;k\ge 1\}$ has a special form as follows:  
since the semi-Dirichlet form with process resolvent $\cv$ is quasi-regular there exists a 
nest $(F_k)_{k\ge 1}$ of compacts. Since $E$ is separable each $F_k$ is also separable.
Then choose for each $F_k$ a countable dense set $\{x^k_n;n\ge 1\}$, and a countable 
dense set $\{x^0_n;n\ge 1\}$ in $E$. Let 
$$
\{y_n; n\ge 1\}:=\bigcup_{k\ge 0}\{x^k_n;n\ge 1\}.
$$ 
$\{y_n; n\ge 1\}$ is a special countable and dense subset of $E$. We have the following:

\begin{lemma} \label{lemma3.8} 
Suppose that the absolute continuity condition holds.
\begin{itemize}	
	\item[(i)] It always holds $\xi \ll \eta \ll\mu$.
	\item[(ii)] Let $\nu$ be defined w.r.t. the special dense subset $\{y_n; n\ge 1\}$. 
	Then $\xi$, $\eta $, and $\mu$ are mutually equivalent measures. 
\end{itemize}
\end{lemma}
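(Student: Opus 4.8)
The plan is to deduce part (ii) from part (i) by establishing the single extra domination $\mu\ll\xi$: once this is known, the chain $\mu\ll\xi\ll\eta\ll\mu$ (the middle and right inclusions being exactly part (i)) forces $\xi$, $\eta$ and $\mu$ to be mutually equivalent. Part (i) I would prove directly from the perturbation identity $U_\beta=V_\beta+Q_\beta V_\beta$ recorded in subsection \ref{3.2}. Since $Q_\beta V_\beta$ is a positive kernel,
\[
\eta=\nu\circ U_\beta=\xi+\nu\circ(Q_\beta V_\beta)\ge\xi ,
\]
so $\xi\ll\eta$. For $\eta\ll\mu$ it is enough to see that $U_\beta(x,\cdot)\ll\mu$ for every $x$: if $\mu(A)=0$, the absolute continuity condition gives $V_\beta(z,A)=0$ for all $z$, hence $Q_\beta V_\beta(x,A)=\int Q_\beta(x,dz)\,V_\beta(z,A)=0$ and $U_\beta(x,A)=0$; integrating against $\nu$ yields $\eta\ll\mu$, and $\xi\le\eta$ then gives $\xi\ll\mu$ too. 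Note that this step uses nothing about the special set $\{y_n;n\ge1\}$, which is precisely why (i) \emph{always} holds.

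For (ii) I would pass to densities. By the absolute continuity condition there is a jointly $\cb\times\cb$-measurable density $v_\beta$ with $V_\beta(x,dy)=v_\beta(x,y)\,\mu(dy)$, which may be chosen so that $x\mapsto v_\beta(x,y)$ is $\cv_\beta$-excessive for $\mu$-a.e.\ $y$. Then $\xi=g\,\mu$ with $g(y)=\sum_{n\ge1}2^{-n}v_\beta(y_n,y)$, and $\mu\ll\xi$ is equivalent to $g>0$ $\mu$-a.e. Setting $N:=\{g=0\}$, I would argue that for $\mu$-a.e.\ $y\in N$ the $\cv_\beta$-excessive function $\phi_y:=v_\beta(\cdot,y)$ vanishes on all of $\{y_n;n\ge1\}$. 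This is where the special construction is used: since $\{y_n\}$ is $\ct$-dense in each compact $F_k$ of a nest for the quasi-regular form $\ca$, and $\phi_y$, being $\cv_\beta$-excessive, is $\ca$-quasi-continuous, the restriction $\phi_y|_{F_k}$ is $\ct$-continuous and vanishes on a dense subset of $F_k$, whence $\phi_y\equiv0$ on $F_k$. As $E\setminus\bigcup_kF_k$ is $\ca$-exceptional and therefore $\mu$-negligible, $\phi_y=0$ $\mu$-a.e., and an excessive function that vanishes $\mu$-a.e.\ is identically zero under absolute continuity. Thus $v_\beta(x,y)=0$ for \emph{all} $x$, for $\mu$-a.e.\ $y\in N$.

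The conclusion then follows quickly: from the last statement $V_\beta(x,N)=\int_N v_\beta(x,y)\,\mu(dy)=0$ for every $x$, so $\int_E V_\beta 1_N\,d\mu=(\mu\circ V_\beta)(N)=0$. Since $\mu$ is a reference measure for $\cv$ (the resolvent of a quasi-regular semi-Dirichlet form), one has $\mu\circ V_\beta\sim\mu$, and hence $\mu(N)=0$. This is exactly $g>0$ $\mu$-a.e., i.e.\ $\mu\ll\xi$, which together with (i) gives the mutual equivalence of $\xi$, $\eta$, $\mu$ claimed in (ii). I would also record along the way the easy inclusion $\xi\ll\eta\ll\mu$ used as the backbone of the equivalence chain.

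The hard part will be the nest-matching in the middle step. A priori the nest on which a given $\cv_\beta$-excessive function is $\ct$-continuous depends on that function, whereas the points $\{y_n\}$ are fixed through the quasi-regularity nest $(F_k)$ \emph{before} $\nu$, and hence $N$ and the family $\{\phi_y\}$, are determined. I would resolve this circularity by exploiting the absolute continuity condition to arrange that $V_\beta$ sends bounded Borel functions to functions $\ct$-continuous on each $F_k$ of a single nest refining $(F_k)$ — equivalently, that the densities $v_\beta(\cdot,y)$ are $\ca$-quasi-continuous with respect to a common nest — so that the density argument applies uniformly in $y$. The auxiliary fact $\mu\circ V_\beta\sim\mu$ is the standard reference-measure property of the resolvent of a quasi-regular semi-Dirichlet form and should be cited rather than reproved.
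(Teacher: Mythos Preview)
Your argument for (i) is correct and is essentially the paper's. For (ii) you also aim at the right inclusion $\mu\ll\xi$, but you take a detour through the density kernel $v_\beta(x,y)$ and the uncountable family $\phi_y=v_\beta(\cdot,y)$, and this is precisely what manufactures the ``hard part'' you flag: you need one $\ca$-nest that simultaneously makes every $\phi_y$ continuous, and the absolute continuity condition by itself does not deliver this. Your proposed resolution (``arrange that $V_\beta$ sends bounded Borel functions to functions $\ct$-continuous on each $F_k$ of a single nest'') is a hope rather than an argument; it would amount to a strong Feller--type property along a nest, which is not a consequence of $V_\beta(x,\cdot)\ll\mu$. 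There is also a secondary gap: a $\cv_\beta$-excessive function is finely continuous, but not automatically $\ca$-quasi-continuous in the topological sense; that conclusion is available for elements of $D(\ca)$ (or for $\lambda$-regular excessive functions), and your $\phi_y$ need not be either.

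The paper sidesteps all of this by a change of viewpoint that collapses the family to a single function. Given $N$ with $\xi(N)=0$, work with $V_\beta 1_N$: since $\xi(N)=\sum_n 2^{-n}V_\beta 1_N(y_n)$ and $V_\beta 1_N\ge0$, one has $V_\beta 1_N(y_n)=0$ for every $n$. After a $\sigma$-finite reduction so that $1_N\in L^2(E,\mu)$, the function $V_\beta 1_N$ lies in $D(\ca)$ and is $\ca$-q.c.\ with respect to \emph{one} nest $(E_k)$. Intersecting with the fixed quasi-regularity nest $(F_k)$ and using that $\{y_n\}$ contains a dense subset of each $F_k$ yields $V_\beta 1_N=0$ $\ca$-q.e., hence $\mu$-a.e., and the $C_0$-property of $(V_\alpha)$ on $L^2(E,\mu)$ forces $\mu(N)=0$. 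Because only a single function is in play, only a single nest is needed, and the circularity you worried about never arises. Your density argument can be rescued by this same trick: integrate first in $y$ and argue with $V_\beta 1_N$ rather than with the individual $\phi_y$.
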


\begin{proof}
(i) Clearly $\xi \ll\eta$. If $\mu(N)=0$ then $V_{\beta} 1_N=0$ by the absolute continuity condition and thus
$U_\beta 1_N  = (I+Q_\beta)V_\beta 1_N=0$, and so $\eta(N)=\nu(U_\beta 1_N )=0$. Hence $\eta\ll \mu$.

(ii) Let $\xi(N)=0$. Then $V_{\beta}1_N(x_n^k)=0$ for all $n,k$.
Since $\mu$ is $\sigma$-finite we may assume that 
$1_N\in L^2(E,\mu)$, otherwise we choose  $D_l\nearrow E$ with $\mu(D_l)<\infty$ for any $l$ and show the following 
for $N\cap D_l$ and any $l$. By $\ca$-quasi-continuity of $V_{\beta}1_N$ there exists an $\ca$-nest $(E_k)_{k\ge 1}$ such that 
$V_{\beta}1_N$ is continuous on each $E_k$ hence on each $\overline{F}_k:=E_k\cap F_k$. 
Therefore by approximation $V_{\beta}1_N=0$ on $\bigcup_{k\ge 1} \overline{F}_k$. It follows that $V_{\beta}1_N=0$ 
$\ca$-quasi-everywhere and so, $\mu(N)=0$  since  $(V_\alpha)_{\alpha>0}$ is a $C_0$-resolvent on $L^2(E, \mu)$.
\end{proof}

The following result offers  the claimed example of quasi-regular generalized Dirichlet form obtained by perturbation with kernels. 
It is a corollary of Proposition \ref{capacitiesperturbation}.

\begin{cor}  \label{GDF2}   
Let $\eta$, and in the transient case $\eta_0$, be defined w.r.t. any dense subset 
$\{x_n; n\ge 1\}\subset E$, suppose that the absolute continuity condition holds, 
and that $Q_{\beta} 1\in \mbox{Qbd} (\cv_\beta)$. 
Consider the assumptions:
\begin{itemize}
	\item[(i)]  $\mu\circ P \ll \mu$.
	\item[(ii)] The measure $\mu \circ Q_{\beta}$ charges no  $\ca$-exceptional set. 
\end{itemize}
Suppose that either (i) or (ii) holds. Then the generalized Dirichlet form $(\ce^{\beta},\cf^{\beta})$ associated with 
$\cu_{\beta}$ on $L^2(E, \eta)$ is quasi-regular. 
If ${\bf (T)}$ holds  then  the generalized Dirichlet form $(\ce^{0},\cf^{0})$ associated with 
$\cu$ on $L^2(E, \eta_0)$ is quasi-regular. Moreover in either case the corresponding process is properly 
associated in the resolvent sense with the corresponding generalized Dirichlet form. 
\end{cor}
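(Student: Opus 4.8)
The plan is to deduce the statement from Proposition \ref{capacitiesperturbation}, reading off the quasi-regularity from the standardness exactly as the proof of Theorem \ref{gdfstandard}$(ii)$ reads it off from $\mu$-tightness, but now for the generalized Dirichlet forms of \cite{RoTr}. First I would fix the standardness. Under hypothesis $(i)$ the measures $\lambda:=\nu+\nu\circ Q_\beta$ and $\mu$ are equivalent and $Q_\beta 1\in\mbox{Qbd}(\cv_\beta)$, so Proposition \ref{capacitiesperturbation}$(ii)$ applies; under hypothesis $(ii)$ the same conclusion comes from Proposition \ref{capacitiesperturbation}$(iii)$. In either case the right process $X$ with resolvent $\cu$ is $\mu$-standard in $\ct$, and the $c^\beta_\lambda$-quasi-continuity of $U_\alpha g$ ($g\in b\cb$, $\alpha>0$) is delivered along the way through Remark \ref{capwithsamefunction}$(b)$ and Theorem \ref{thm2.4}$(ii)$. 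By Lemma \ref{lemma3.8}$(i)$ one has $\eta\ll\mu$ (and $\eta_0\sim\mu$ in the transient case, by Remark \ref{abscont}$(b)$), so the standardness, the tightness, and all $\mu$-a.e. potential-theoretic statements transfer to $\eta$ (resp. $\eta_0$).

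Next I would produce the compact nest. Since $E$ is separable metrizable Lusin and $X$ has left limits up to $\zeta$, Remark \ref{quasileftandcap}$(c)$ gives $\mu$-tightness; as tightness of $c^\beta_\lambda$ is independent of $\beta$, the process capacity of $\cu_\beta$ is tight and furnishes an increasing sequence of $\ct$-compacts $(K_n)_n$ whose process capacity tends to $0$. The crux is to turn this into an $\ce^\beta$-nest. For the form of \cite{RoTr} the resolvent is $\cu_\beta$, whose $1$-resolvent is $U_{\beta+1}$; writing $(U_{\beta+1}\varphi)_{E\setminus K_n}$ for the $\ce^\beta$-reduced function (with a reference $0<\varphi\le1$), the minimality characterization of reductions in \cite[III. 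Proposition 1.7(ii)]{St1} gives the domination
$$
(U_{\beta+1}\varphi)_{E\setminus K_n}\le R^{E\setminus K_n}_{\beta+1}(U_{\beta+1}\varphi)\quad\eta\mbox{-a.e.},
$$
the right-hand side being the $\cu_{\beta+1}$-process reduced function. Consequently the $\ce^\beta$-capacity $\mbox{Cap}_\varphi(E\setminus K_n)$ is bounded by the process capacity and tends to $0$, so $(K_n)_n$ is an $\ce^\beta$-nest of compacts, which is condition $(i)$ of quasi-regularity. The very same domination shows that every process-capacity nest is an $\ce^\beta$-nest, whence the $c^\beta_\lambda$-quasi-continuity of $U_{\beta+\alpha}g$ obtained above upgrades to $\ce^\beta$-quasi-continuity; this is precisely the proper association in the resolvent sense.

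Finally I would harvest the two remaining conditions as in the proof of Theorem \ref{gdfstandard}$(ii)$. The compact $\ce^\beta$-nest yields, as in \cite[I. Remark 3.5]{St1}, that $U_{\beta+1}(L^2(E,\eta)_b)\subset\cf^\beta$ densely, and its elements are $\ce^\beta$-quasi-continuous by proper association; this is condition $(ii)$. A countable family of $\ce^\beta$-quasi-continuous elements of $\cf^\beta$ separating the points of $E$ off an $\ce^\beta$-exceptional set is then constructed as in \cite[IV. Lemma 3.11 and the paragraph below]{St1}, giving condition $(iii)$; the transient case is identical with $\eta_0$ and $\cu$ in place of $\eta$ and $\cu_\beta$. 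The main obstacle is the middle step: the generalized Dirichlet form of \cite{RoTr} is defined abstractly from its $L^2(E,\eta)$-generator, so one must match its intrinsic nests, capacity, and exceptional sets with the process-potential-theoretic objects of $\cu$ — it is here that the absolute continuity condition (through $\eta\ll\mu$) and the quasi-regularity of $(\ca,D(\ca))$ are essential, while the remaining steps are bookkeeping.
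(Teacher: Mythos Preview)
Your proposal is correct and follows essentially the same route as the paper: invoke Proposition \ref{capacitiesperturbation} to obtain $\mu$-standardness, transfer to $\eta$ (resp.\ $\eta_0$) via Lemma \ref{lemma3.8} and Remark \ref{abscont}, then repeat the proof of Theorem \ref{gdfstandard}$(ii)$ for the \cite{RoTr} form. Two minor points: you write $\eta_0\sim\mu$, but Lemma \ref{lemma3.8}$(i)$ together with Remark \ref{abscont}$(b)$ only yields $\eta_0\sim\eta\ll\mu$ for an \emph{arbitrary} dense subset (equivalence with $\mu$ needs the special subset of Lemma \ref{lemma3.8}$(ii)$); fortunately only $\eta_0\ll\mu$ is needed. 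Second, the paper applies Proposition \ref{capacitiesperturbation} at the shifted level $\cv_\beta$ (verifying via Remark \ref{subordinate}$(c),(d)$ that $Q_{2\beta}1\in\mbox{Qbd}(\cv_{2\beta})$, $\mu\circ P_\beta\ll\mu\circ P$, and $\ca$-exceptional $\Rightarrow$ $\ca_\beta$-exceptional) so as to land directly on the $\cu_\beta$-process, whereas you apply it at the original level and pass to the $\beta$-subprocess; both are valid.
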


\begin{proof}
By Lemma \ref{lemma3.8} we have $\eta\ll\mu$ and by Remark \ref{abscont}(b) $\eta$ is equivalent to $\eta_0$ if ${\bf (T)}$ holds, 
hence we also have  $\eta_0\ll\mu$ if ${\bf (T)}$ holds. We have $\mu\circ P_{\beta} \ll\mu\circ P$, and $\ca$-exceptional sets are 
 $\ca_{\beta}$-exceptional. 
According to assertions  $(ii)$ and $(iii)$ of Proposition \ref{capacitiesperturbation} applied to $\cv_\beta$ instead of $\cv$
(this is possible taking into account assertions (c) and (d) of Remark \ref{subordinate}; in particular, from the hypothesis 
on $Q_\beta 1$ we get $Q_{2\beta}  1\leq Q_\beta 1 \in \mbox{Qbd} (\cv_\beta)\subset \mbox{Qbd} (\cv_{2\beta})$ 
and thus $Q_{2\beta}  1 \in \mbox{Qbd} (\cv_{2\beta})$), 
the processes corresponding to $\cu_\beta$  are  $\mu$-standard, hence  $\eta$-standard since $\eta\ll\mu$.
From Remark \ref{quasileftandcap}(c)  they are also $\eta$-tight. The quasi-regularity as well as the proper association with the forms $(\ce^{\beta},\cf^{\beta})$ follows 
with the help of Theorem \ref{thm2.4}(ii) as in the proof of Theorem \ref{gdfstandard}(i) and (ii). The transient case is similar.
\end{proof}

\vspace{3mm}

\noindent 
{\bf Acknowledgments}\\

{\small The first named author gratefully acknowledges support from the Romanian Ministry
of Education, Research, Youth and Sport (CNCSIS PCCE-55/2008).

The second named author gratefully acknowledges support from the research project "Advanced Research and Education of Financial Mathematics" 
at Seoul National University.}

\end{document}